\newtheorem{theorem}{Theorem}
\newtheorem{lemma}{Lemma}
\newtheorem{example}{Example}
\newtheorem{remark}{Remark}
\newif\ifmarkauthors
  \definecolor{darkred}{RGB}{139,0,0}
  \definecolor{darkgreen}{RGB}{0,100,0}
  \definecolor{darkmagenta}{RGB}{139,0,139}
  \definecolor{darkorange}{RGB}{190,70,20}
  \def\cbdelete[#1]{}
  \def\cbdelete[#1]{}
\newcommand{\bsgamma}{\boldsymbol{\gamma}}
\newcommand{\bsj}{\boldsymbol{j}}
\newcommand{\cS}{{\cal S}}
\newcommand{\e}{{\varepsilon}}
\newcommand{\uu}{\mathfrak{u}}
\newcommand{\NN}{\mathbb{N}}
\newcommand{\RR}{\mathbb{R}}
\begin{document}

\title{Exponential tractability \\ of linear weighted tensor 
product problems\\
in the worst-case setting  \\
for arbitrary linear functionals
}
\author{Peter Kritzer\thanks{P.~Kritzer
is supported by the Austrian Science Fund (FWF),
Project F5506-N26, which is part of the Special Research 
Program ``Quasi-Monte Carlo Methods:
Theory and Applications''.}, 
Friedrich Pillichshammer\thanks{F.~Pillichshammer
is supported by the Austrian Science Fund (FWF),
Project F5509-N26, which is part of the Special 
Research Program ``Quasi-Monte Carlo Methods:
Theory and Applications''.}, 
Henryk Wo\'zniakowski\thanks{
H. Wo\'zniakowski is supported in part by the National Science Center,
Poland, based on the decision DEC-2017/25/B/ST1/00945.
\newline
The authors thank the Johann Radon Institute for Computational and Applied Mathematics in Linz, 
where most research for this paper has been done during the Special Semester on Multivariate Algorithms and their Foundations in Number Theory, October 15 - December 21, 2018}}
\maketitle

\begin{abstract}
We study the approximation of compact 
linear operators defined over certain weighted tensor product Hilbert spaces. 
The information complexity is defined as the minimal number of arbitrary
linear functionals which is needed to obtain an $\e$-approximation for the
$d$-variate problem. It is fully determined in terms of the
weights and univariate singular values. 
Exponential tractability means that
the information complexity is bounded by a certain function which
depends polynomially on $d$ and logarithmically on $\e^{-1}$.  
The corresponding un-weighted problem was
studied in \cite{HKW19} with many negative results for
exponential tractability.
The product weights studied in the present paper change the
situation. Depending on the form of polynomial dependence on $d$ and
logarithmic dependence on $\e^{-1}$, we  
study exponential strong polynomial, exponential
polynomial, exponential quasi-polynomial, and
exponential $(s,t)$-weak tractability with $\max(s,t)\ge1$.   
{}For all these notions of exponential tractability,
we establish necessary and sufficient conditions on weights
and univariate singular values
for which it is indeed possible to achieve the corresponding notion of
exponential tractability.
The case of exponential $(s,t)$-weak tractability with $\max(s,t)<1$ is
left for future study. 
The paper uses some general results obtained in \cite{HKW19} 
and \cite{KW19}.
\end{abstract}

\centerline{\begin{minipage}[hc]{130mm}{
{\em Keywords:} exponential tractability, weighted linear tensor product problem, approximation of compact linear operators, worst-case setting\\
{\em MSC 2010:} 65Y20, 65D15 }
\end{minipage}}

\section{Introduction}
This paper deals with tractability of linear multivariate problems, which has been studied in a large number of papers, 
and is at the core of the research field of Information-Based Complexity (IBC). For introductions to IBC, we refer to 
the books \cite{TWW} and \cite{TW98}. For a recent and comprehensive overview of results on tractability, we refer the 
interested reader to the trilogy \cite{NW08,NW10,NW12}. 

In the present paper we study the information 
complexity of a compact linear operator $S_d$ 
from a separable Hilbert space $H_d$ into 
another Hilbert space $G_d$. The information complexity 
$n(\varepsilon,S_d)$ is defined as the minimal number of 
linear functionals needed by an algorithm 
which approximates $S_d$ to within an error threshold of
$\varepsilon>0$. 
As shown in \cite{HNW13}, without loss of generality we may consider
only continuous linear functionals from 
the class $\Lambda_d^{\rm all}=
H_d^*$.

We use different notions of tractability to describe how 
the information complexity of a given problem 
depends on $\varepsilon^{-1}$ and $d$ as $\max(\varepsilon^{-1},d)$
tends to infinity in an arbitrary way.  
A problem is called intractable if its information complexity depends 
exponentially on $\varepsilon^{-1}$ or $d$. If a problem is tractable, 
we describe sub-exponential dependence on some powers of
$\varepsilon^{-1}$ and~$d$ by 
using the classification into various notions of tractability, 
which can be summarized by the algebraic (abbreviated ALG) and 
exponential (abbreviated EXP) cases. For the algebraic 
case, we need to verify that $n(\varepsilon, S_d)$ is bounded 
by certain functions of $d$ and $\varepsilon^{-1}$ which are, in particular, 
not exponential in some powers of $d$ and $\varepsilon^{-1}$. 
For the exponential case, we replace 
$\varepsilon^{-1}$ by $1+\log\varepsilon^{-1}$, 
and consider the same notions of tractability as for the algebraic case. 

Most papers on tractability have dealt with  the notions of 
algebraic tractability, which can be said to be the ``standard'' case 
of tractability. 
For an overview of results we again refer to \cite{NW08,NW10,NW12} 
and the references therein, as well as \cite{PP09} and \cite{WW17}. 

However, there is also a recent stream of work on 
exponential tractability, and this is what we are going to study
here.  For results on exponential tractability 
for general linear problems (without necessarily assuming tensor
product structure), we refer the reader 
to the recent paper \cite{KW19}, for exponential 
tractability for linear problems on un-weighted tensor product spaces, 
we refer the reader to \cite{HKW19,PP14,PPW17}. 
Further results on exponential tractability 
can, e.g., be found in the papers \cite{DKPW14,DLPW11,
IKLP15,IKPW16a,IKPW16b,KPW14,KPW17,LX16a,LX16b,LX17,LZ19,
PPXY19,SW18,S17,W19,X15}.

For un-weighted tensor product problems, such as studied in  
\cite{HKW19}, we have many negative results for
exponential tractability. Our point of departure is to verify
if these negative results can be changed if we switch to 
weighted tensor products with 
product weights. Indeed, this is the case. We now
illustrate a sample of our results leaving the general results to
Section~\ref{Results}.

For weighted tensor product problems, the information complexity
depends on two non-increasing sequences, $\{\lambda_j\}_{j\in \NN}$ and
$\{\gamma_j\}_{j\in \NN}$. Here, the $\lambda_j$'s are the squares of 
the ordered singular values of the univariate operator,
and the $\gamma_j$'s are product weights which moderate the importance of the
$j^{{\rm th}}$ univariate problem in the definition of the tensor product 
for the multivariate case.  
Without loss of generality we may assume that $\lambda_1=1$ 
and, to omit the trivial case, that $\lambda_2>0$. To make the
presentation of our results easier we also assume that
all weights $\gamma_j$ are positive. The un-weighted case is
obtained if we take $\gamma_j=1$ for all $j\in\NN$. 

The concept of exponential strong polynomial tractability (EXP-SPT) is defined
when there are two non-negative numbers $C$ and $p$ such that
the information complexity is bounded by
$$
C(1+\log\,\varepsilon^{-1})^p\ \ \ \ \ 
\mbox{for all $\varepsilon\in(0,1]$ and
$d\in\NN$}. 
$$
Similarly, the concept of exponential polynomial tractability (EXP-PT)
is defined 
when there are three non-negative numbers $C,q$, and $p$ such that
the information complexity is bounded by
$$
C\,d^{\,q}(1+\log\,\varepsilon^{-1})^p\ \ \  \mbox{for all 
$\varepsilon\in(0,1]$ and $d\in\NN$}. 
$$
Note that especially EXP-SPT is a quite demanding property 
since the upper bound on the information complexity must be independent 
of $d$ and at most polynomially dependent 
on $1+\log\,\e^{-1}$. For EXP-PT we allow that the information complexity may 
depend polynomially on $d$ and $1+\log\,\e^{-1}$. One might suspect
that EXP-SPT and also EXP-PT hold only for extremely small
$\lambda_j$'s and $\gamma_j$'s. As we shall see, this is indeed the
case.  
 
We prove that 
\begin{itemize}
\item EXP-SPT and EXP-PT are equivalent,
\item EXP-SPT holds if and only if
$$
\lim_{j\to\infty}\lambda_j=\lim_{j\to\infty}\gamma_j=0\ \ \ 
\mbox{and}\ \ \ 
B_{\rm EXP-SPT}:=\limsup_{\varepsilon\to0}
\frac{d(\e)\,\log\,j(\e)}{\log\,\log\,\e^{-1}}<\infty,
$$
where 
\begin{eqnarray*}
d(\e)&=&\max\{\,d\in\NN\,:\ \gamma_d\,>\,\e^2\,\},\\
j(\e)&=&\max\{\,j\in\NN\,:\ \lambda_j\,>\,\e^2\,\}.
\end{eqnarray*}
Furthermore, the exponent of EXP-SPT, defined as the
infimum of those $p$ for which the estimate on the information complexity for EXP-SPT holds, 
is equal to $B_{\rm EXP-SPT}$.
\end{itemize}

As we now see, the relaxation from EXP-SPT to EXP-PT is not
essential.
For the un-weighted case, $\gamma_j=1$, both EXP-SPT and EXP-PT
do not hold.

Let us check for which $\lambda_j$'s and $\gamma_j$'s
we have EXP-SPT (and EXP-PT) for the weighted case. 
Since $d(\e)$ goes to infinity as $\e$ tends to
zero, we see 
that $\log\,j(\e)$ must go to infinity slower than $\log\,\log\,\e^{-1}$.
It is easy to check that for $\lambda_j=\exp(-\alpha\,j)$ for some
(maybe very large) $\alpha>0$, we get 
$B_{\rm  EXP-SPT}=\infty$. Consider thus
$\lambda_j=\exp(-\exp(\alpha\,j))$, this time with (maybe very small)
$\alpha>0$. Again, it is easy to check that now
$\log\,j(\e)/(\log(\log(\e^{-1})))$ goes to zero. Obviously we cannot
yet claim EXP-SPT since it also depends on $\gamma_j$'s. By the same
token we conclude that $d(\e)$ must go to infinity slower  than 
$\log\,\log\,\e^{-1}$. So $\gamma_j=\exp(-\exp(\alpha\,j))$ is not
enough. For positive $\alpha_1,\alpha_2$, consider then 
$$
\lambda_j=\exp(-\exp(\alpha_1\,j)) \ \ \ \mbox{and}\ \ \ 
\gamma_j=\exp(-\exp(\exp(\alpha_2\,j))).
$$
We now have 
$$
j(\e)=\frac{1+o(1)}{\alpha_1}\,\log\,\log\,\e^{-1}\ \ \ \mbox{and}\
\ \
d(\e)=\frac{1+o(1)}{\alpha_2}\,\log\,\log\,\log\,\e^{-1}.
$$
Then $B_{\rm EXP-SPT}=0$, so that EXP-SPT now holds with the zero
exponent.  Further examples 
of different notions of exponential tractability will be presented 
in Section \ref{Results}.

\vskip 1pc
Of course, we may say that EXP-SPT (or EXP-PT) is a too strong notion
of exponential tractability.  
We now present a result for a much weaker notion, namely for exponential
weak tractability (EXP-WT) 
which holds if the logarithm of the 
information complexity divided by $d+\log\,\e^{-1}$ goes to zero 
if $\max(d,\e^{-1})$ goes to infinity. We prove that  
EXP-WT holds if and only if
$$
\lim_{j\to\infty}\gamma_j=0 \ \ \ \mbox{and}\ \ \ 
\lim_{j\to\infty}\frac{\log\,\lambda_j^{-1}}{\log\,j}=\infty.
$$
The conditions on the $\lambda_j$'s and the $\gamma_j$'s are now much more lenient
but still do not hold for the un-weighted case. 
{}For the weighted case, we obtain EXP-WT if for $\beta>1$ we have  
$$
\lambda_j=\mathcal{O}\left(\exp\left(-(\log\,j)^\beta\right)\right)
$$
and $\gamma_j$ goes to zero arbitrarily slowly. 
\vskip 1pc
We also consider other notions of exponential tractability
such as EXP-QPT, exponential quasi-polynomial tractability, and
EXP-$(s,t)$-WT, exponential $(s,t)$-weak tractability for 
$\max(s,t)\ge1$.
The corresponding necessary and sufficient conditions on these notions
of exponential tractability are presented in Theorem \ref{thm1}. 
The case of exponential $(s,t)$-weak tractability with $\max(s,t)<1$
as well as EXP-UWT, exponential uniform weak tractability, are left
for future research. 

\vskip 1pc
We end the introduction by presenting a couple of other open problems.
\begin{itemize}
\item In this paper, we assume the class $\Lambda_d^{\rm all}$ 
of all continuous linear functionals as
  information evaluations. It is of a practical interest to
  consider the class $\Lambda^{\rm std}_d$ of only function values. 
In this case we assume that $H_d$ is a reproducing kernel Hilbert
  space so that function values are continuous linear functionals.
The open problem is to find necessary and sufficient conditions 
for various notions of exponential tractability for the class
  $\Lambda^{\rm std}_d$, and to compare them to necessary and sufficient
conditions for the class 
$\Lambda_d^{\rm all}$.
We believe that the recent paper 
\cite{KU2019} may be very helpful for the solution of this problem.
\item As we already mentioned, we consider in this paper only
product weights. It would be of interest to study more general weights 
and to see how the conditions for product weights can be changed. 
\end{itemize}

We summarize the contents of the rest of this paper. 
In Section 2 we  define the problem we study here.
In Section 3 we present the results, and in Section 4 the proofs.

\section{Problem Setting}

We outline the formal setting considered in this paper. 
%H Let $D_1$ be a Borel measurable nonempty subset of $\RR$, 
%and l
Let $H_1$ be a separable infinite-dimensional 
Hilbert space 
%Hof real valued  functions defined on $D_1$
with inner product 
denoted by $\langle \cdot , \cdot \rangle_{H_1}$. 
Let $G_1$ be an arbitrary Hilbert space, and let $S_1:H_1 \rightarrow G_1$
be a compact linear operator. We stress that compactness of $S_1$ is a
necessary condition to get a finite information complexity and any type
of algebraic or exponential tractability. Then 
$$
W_1=S_1^*S_1\,:\ H_1\to H_1
$$
is also a compact and self-adjoint non-negative operator.  Let
$(\lambda_j,e_j)_{j\in\NN}$ denote its $j^{{\rm th}}$ eigenpair,
$$
W_1 e_j=\lambda_je_j,
$$
where the $e_j$'s are orthonormal and the $\lambda_j$'s non-increasing.
Without loss of generality we assume that $\lambda_1=1$ and,
to omit the trivial problem, that $\lambda_2>0$.  Due to compactness
of $S_1$ we have $\lim_{j\to\infty}\lambda_j=0$. 
%H We also assume that $e_1=1$ to simplify further notation. 

Let $d \in \NN$. Define 
$$
H_d:=\underbrace{H_1 \otimes H_1 
\otimes \cdots \otimes H_1}_{d\mbox{{\scriptsize \ times}}}
$$ 
to be the $d$-fold tensor product of $H_1$. 
%H This is a Hilbert space of real valued functions defined on 
%H $D^d \subseteq \RR^d$. 
The inner product is denoted by 
$\langle \cdot ,\cdot \rangle_{H_d}$. Similarly, define
$$
G_d:=\underbrace{G_1 \otimes G_1 
\otimes \cdots \otimes G_1}_{d\mbox{{\scriptsize \ times}}}
$$ 
and the $d$-fold tensor product operator 
$$
S_d:=\underbrace{S_1 \otimes S_1 
\otimes \cdots \otimes S_1}_{d\mbox{{\scriptsize \ times}}}:H_d \rightarrow G_d.
$$
Obviously, $S_d$ is compact. Then
$$
W_d=S_d^*S_d\,:\ H_d\to H_d
$$
is also a compact and
self-adjoint non-negative operator. 
Let $\bsj=(j_1,j_2,\dots,j_d)\in\NN^d$.  
The eigenpairs of $W_d$ are $(\lambda_{d,\bsj},e_{d,\bsj})_{\bsj\in\NN^d}$
with 
$$
\lambda_{d,\bsj}=\prod_{k=1}^d\lambda_{j_k}\ \ \ \mbox{and}\ \ \
e_{d,\bsj}=e_{j_1}\otimes e_{j_2}\otimes\cdots\otimes e_{j_d}.
$$
Hence, we have at least $2^d$ positive eigenvalues of $W_d$.
The square roots of the $\lambda_{d,\bsj}$ are the singular values of $S_d$.

In the following we write $[d]:=\{1,2,\ldots,d\}$. 
Subsets of $[d]$ will be denoted by $\uu$. 
{}From \cite[Sec.~5.3.1]{NW08},    
elements $f \in H_d$ can be decomposed as a sum of mutually
orthogonal elements $f_{\uu}$, $\uu \subseteq [d]$, each of which belongs to
$\bigotimes_{j \in \uu}H_1$, in the form $$f=\sum_{\uu \subseteq [d]} f_{\uu}.$$ 
Furthermore, for $f,g \in H_d$ the inner product is 
$$\langle f , g \rangle_{H_d}= 
\sum_{\uu \subseteq [d]} \langle f_{\uu} , g_{\uu} \rangle_{H_d}$$ and
\begin{equation}\label{eq:norm}
\|f\|_{H_d}^2 =\sum_{\uu \subseteq [d]} \|f_{\uu}\|_{H_d}^2.
\end{equation}
Further information on this orthogonal 
decomposition can be found in \cite[Sec.~5.3.1]{NW08}.   

Eq.~\eqref{eq:norm} shows that 
the contribution of each $f_{\uu}$ is the same, 
which suggests that any group of $f_{\uu}$'s is equally 
important in their contribution to the norm of $f$. 
However, in this paper we are interested in 
the weighted setting which is motivated by the assumption 
that some groups of $f_{\uu}$'s are more important than others, 
or that an element $f$ 
does not depend on some groups of variables at all. 
Such a behavior can be modeled with the help of so-called weights. 
Here we restrict ourselves to {\it product weights} as in the first
paper on weighted spaces \cite{SW98}.
 
Let $\bsgamma=\{\gamma_j\}_{j \in \NN}$ 
be a sequence of non-increasing positive reals, which are called
product weights. The case when some product weights $\gamma_j$'s
are zero is considered in Remark 4. For simplicity we assume that
$\gamma_j\in(0,1]$.  For $\uu \subseteq [d]$ put 
$$
\gamma_{\uu}=\prod_{j\in \uu} \gamma_j,
$$ 
where the empty product is considered to be one, i.e.,
$\gamma_{\emptyset}=1$.
 
Now we define the weighted Hilbert space 
$H_{d,\bsgamma}$ as a separable Hilbert space 
that is algebraically the same as the space $H_d$ 
but whose inner product for $f,g \in H_d$ is given by 
\begin{equation}\label{nonzeroweights}
\langle f,g\rangle_{H_{d,\bsgamma}}:=
\sum_{\uu \subseteq [d]} \gamma_{\uu}^{-1} \langle f_{\uu},g_{\uu}\rangle_{H_d}.
\end{equation}
{}For product weights, the space $H_{d,\bsgamma}$ 
can also be described as a tensor product space, namely, 
$$
H_{1,\gamma_1}\otimes H_{1,\gamma_2}
\otimes \cdots \otimes H_{1,\gamma_d}.
$$ 
In particular, if $\gamma_j=1$ for all $j \in \NN$, 
then we have $H_{d,\bsgamma}=H_d$.

We study the sequence $\cS_{\bsgamma}=\{S_{d,\bsgamma}\}_{d \in \NN}$ 
of operators given by 
$$
S_{d,\bsgamma}:H_{d,\bsgamma}\rightarrow G_d, \ \ \ S_{d,\bsgamma}
(f)=S_d (f)
$$ 
and consider the problem of approximating 
$S_{d,\bsgamma}(f)$ in the norm of $G_d$ 
for elements $f$ from the unit ball of
$H_{d,\bsgamma}$. We shall show in the next section
how the weights affect the singular values of $S_{d,\bsgamma}$. 

The elements $S_{d,\bsgamma}(f)$ are approximated by 
algorithms $A_{d,n}(f)$ which use at most 
$n$ information evaluations from the class $\Lambda_d^{\rm all}=
H_{d,\bsgamma}^*$ which consists of all continuous linear functionals
defined on $H_{d,\bsgamma}$.
The general form of an algorithm $A_{d,n}$ is
$$
A_{d,n}(f)=\phi_{d,n}(L_1(f),L_2(f),\dots,L_d(f)),
$$ 
where $L_j\in\Lambda_d^{\rm all}$ and $\phi_{d,n}:\RR^n \to G_d$ is any
mapping. The choice of $L_j$ can be adaptive, i.e., it may
depend on the previously computed information
$L_1(f),L_2(f),\dots,L_{j-1}(f)$.

The error is studied in the worst-case setting and is 
defined as $$e(A_{d,n})=\sup_{f \in H_{d,\bsgamma} \atop
  \|f\|_{H_{d,\bsgamma}} \le
  1}\|S_{d,\bsgamma}(f)-A_{d,n}(f)\|_{G_d}.
$$ 
Denote by $e_0$ the initial error, i.e., 
$$
e_0=\sup_{f \in H_{d,\bsgamma} 
\atop \|f\|_{H_{d,\bsgamma}} \le 1}\|S_{d,\bsgamma}(f)\|_{G_d},
$$ 
which is just the operator norm of $S_{d,\bsgamma}$.

We are interested in the minimal number $n$ of
information evaluations from the class $\Lambda_d^{\rm all}$ 
in order to reduce the initial error by a factor of $\varepsilon \in (0,1]$. 
To this end let 
$$
e(n,S_{d,\bsgamma})= \inf_{A_{d,n}}e (A_{d,n})
$$ 
be the $n^{{\rm th}}$ minimal error, 
where the infimum is extended over all algorithms 
$A_{d,n}$ which use at most $n$ information evaluations  
from the class $\Lambda_d^{\rm all}$. 
Then we study the information complexity 
for the normalized error criterion, which is defined by 
$$
n(\varepsilon,S_{d,\bsgamma})=
\min\{n \, : \, e(n,S_{d,\bsgamma}) \le \varepsilon\, e_0\}.
$$
It is known that for the class $\Lambda_d^{\rm all}$ the information
complexity is fully characterized in terms of the singular values of 
$S_{d,\bsgamma}$, or equivalently, in terms of the eigenvalues of
$W_{d,\bsgamma}=S_{d,\bsgamma}^*S_{d,\bsgamma}$.

We are interested in the behavior of 
$n(\varepsilon,S_{d,\bsgamma})$ when $\max(d,\e^{-1})$ goes to
infinity in an arbitrary way.
This is the subject of tractability, see~\cite{NW08,NW10,NW12}. 
The notions of tractability classify the order of growth of 
the information complexity.
%H  when $d$ increases and when the error demand $\varepsilon$ goes to zero. 
Standard tractability is studied 
in the {\it algebraic setting} (ALG), as it is called nowadays. 
In this case, one describes the dependence of $n(\varepsilon,S_{d,\bsgamma})$ 
on the dimension and the error threshold, i.e.,  with respect to the pair 
$(d,\varepsilon)$.
Recently also the {\it exponential setting} (EXP) gained much attention, 
and this setting is the central topic of the present paper. 
In the exponential setting, we study 
how $n(\varepsilon,S_{d,\bsgamma})$ behaves with respect to the 
pair $(d,1+\log\varepsilon^{-1})$. 
We are ready to define various notions of EXP tractabilities. 

The problem $S_{\bsgamma}=\{S_{d,\bsgamma}\}$ is said to be: 

\begin{itemize}
 \item {\it Exponentially strongly polynomially 
tractable (EXP-SPT)} if there are $C, p \ge 0$ such that 
 \begin{equation}\label{def:ESPT}
 n(\varepsilon,S_{d,\bsgamma}) \le 
C (1 + \log  \varepsilon^{-1})^p \ \ \ \ 
\forall d \in \NN,\ \forall \varepsilon \in (0,1].
 \end{equation}
 The infimum over all exponents $p \ge0$ such that 
\eqref{def:ESPT} holds for some $C \ge 0$ is called 
the exponent of EXP-SPT and is denoted by $p^*$.
 \item {\it Exponentially polynomially tractable (EXP-PT)} if 
there are $C, p, q \ge 0$ such that 
$$
n(\varepsilon,S_{d,\bsgamma}) \le C d^{\,q} (1 + \log\,
  \varepsilon^{-1})^p \ \ \ \ 
\forall d \in \NN,\ \forall \varepsilon \in (0,1].$$ 
 \item {\it Exponentially quasi-polynomially tractable 
(EXP-QPT)} if there are $C, t \ge 0$ such that 
 \begin{equation}\label{def:EQPT}
 n(\varepsilon,S_{d,\bsgamma}) 
\le C \exp(t(1+\log\,d)(1+\log(1 + \log\,
\varepsilon^{-1})))\ \ \ \ \forall d \in \NN,\ \forall \varepsilon \in (0,1].
 \end{equation}
 The infimum over all exponents $t \ge0$ such that 
\eqref{def:EQPT} holds for some 
$C \ge 0$ is called the exponent of EXP-QPT and is denoted by $t^*$.
 \item {\it Exponentially $(s,t)$-weakly tractable (EXP-$(s,t)$-WT)} 
for positive $s$ and $t$ if
$$
\lim_{d+\varepsilon^{-1} \rightarrow \infty} 
\frac{\log \max(1, n(\varepsilon, 
S_{d,\bsgamma}))}{d^t + (1 + \log  \varepsilon^{-1})^s} = 0.
$$ If $s=t=1$, we speak of 
{\it exponential weak tractability (EXP-WT)}.
\item {\it Exponentially uniformly weakly tractable (EXP-UWT)}
if EXP-$(s,t)$-WT holds for all positive $s$ and $t$.

%H {\bf H: we shall see if we need this definition.} 
\end{itemize}

To shorten the notation, we often say that the problem $S_{\bsgamma}$
is EXP-SPT, EXP-PT, etc., by saying that EXP-SPT, EXP-PT, etc., 
holds. As already mentioned, we do not consider EXP-$(s,t)$-WT with
$\max(s,t)<1$ and exponential uniform tractability in this paper.

\begin{remark}\rm
In some papers, for example in \cite{PPW17}, the notion of
EXP-$(s,t)$-WT is called $(s,\ln^{\kappa})$-weak tractability, where
$s$ corresponds to $t$ and $\kappa$ corresponds to $s$ in our notation.

The notions of quasi-polynomial, $(s,t)$-weak and
uniform weak tractabilities in the algebraic case were for the first
time defined correspondingly in
\cite{GW11,S13,SW15}. Here we adopt these concepts for exponential
tractability by replacing $\e^{-1}$ by $1+\log\,\e^{-1}$.   
\end{remark}
\vskip 1pc

The main result of this work, a characterization of 
weighted linear tensor product problems with respect 
to exponential tractability, will be stated in the next 
section as Theorem~\ref{thm1}. 
The proofs will be presented in Section~\ref{sec:proof}.

\section{The results}\label{Results}

To begin with we introduce another, for our purpose more convenient, 
representation of the information complexity.  It is known 
from \cite{TWW}, see also \cite{NW08}, 
how the information complexity depends on the 
singular values of $S_{d,\bsgamma}$, which 
are the same as the square-roots of the eigenvalues 
of the compact self-adjoint and positive 
definite linear operator 
$W_{d,\bsgamma} = S_{d,\bsgamma}^* S_{d,\bsgamma} : 
H_{d,\bsgamma}\rightarrow H_{d,\bsgamma}$.

Let $\{\lambda_j\}_{j \in \NN}$ and 
$\{\gamma_j\}_{j \in \NN}$ 
be as in the previous section. For $\bsj=(j_1,\ldots,j_d)\in \NN^d$ 
define 
$$
\uu(\bsj):=\{k \in [d]\ : \ j_k \ge 2\} \ \ \ 
\mbox{ and }\ \ \ \lambda_{d,\bsj}:=
\lambda_{j_1}\lambda_{j_2}\cdots \lambda_{j_d}.
$$ 
{}From \cite[Section~5.3]{NW08}) we know that the eigenvalues of 
$W_{d,\bsgamma}$ are  
$$
\lambda_{d,\bsgamma,\bsj}:=
\gamma_{\uu(\bsj)} \lambda_{d,\bsj}=
\left(\prod_{k=1 \atop j_k \ge 2}^d \gamma_k\right) 
\lambda_{j_1}\cdots \lambda_{j_d} = \left(\prod_{k\in \uu(\bsj)} \gamma_k \right) \lambda_{d,\bsj}.
$$ 
Clearly, $\lambda_{d,\bsgamma,\bsj}$ is maximized for
$\bsj=(1,1\dots,1)$ and then it is equal to $1$. 
Hence, $\|W_{d,\bsgamma}\|=\|S_{d,\bsgamma}\|=1$ and 
the initial error $e_0$ is also one. This means that the problem is well
normalized for all $d\in\NN$ and all product weights $\bsgamma$.

The information complexity is now
%H (see \cite[Section~5.3]{NW08}) 
%H $$n(\varepsilon,S_{d,\bsgamma})=\left|
%H \left\{\bsj \in \NN^d\ : \ 
%H \lambda_{d,\bsgamma,\bsj} > 
%H \varepsilon^2 \lambda_1^d \max_{\uu \subseteq [d]} 
%H \gamma_{\uu}\left(\frac{\lambda_2}{\lambda_1}
%H \right)^{|\uu|}\right\}\right|.$$ 

%H Here the term $\lambda_1^d \max_{\uu \subseteq [d]} 
%H \gamma_{\uu}(\lambda_2/\lambda_1)^{|\uu|}$ is the 
%H largest possible value for $\lambda_{d,\bsgamma,\bsj}$ 
%H which is attained for $\bsj \in \{1,2\}^d$ 
%H where $j_k=2$ iff $k \in \uu$. 
%H Since the weights $\gamma_j \in [0,1]$ 
%H we have $\max_{\uu \subseteq [d]} 
%H \gamma_{\uu}(\lambda_2/\lambda_1)^{|\uu|}=1$. 
%H Since we also assumed that $\lambda_1=1$, 
%H the problem is well normalized and we have 
\begin{equation}\label{eq:infocomp_alt}
n(\varepsilon,S_{d,\bsgamma})=
 \left|\left\{\bsj \in \NN^d\ : \ \lambda_{d,\bsgamma,\bsj} > 
 \varepsilon^2 \right\}\right|.
\end{equation}
Define $$\lambda_{k,j}=\left\{
\begin{array}{ll}
1 & \mbox{ if } j=1,\\
\gamma_k \lambda_j & \mbox{ if } j \ge 2. 
\end{array}\right.$$ 
Then $$
\lambda_{d,\bsgamma,\bsj}=\prod_{k=1}^d \lambda_{k,j_k}$$ and hence 
\begin{equation}\label{def:infocomp}
n(\varepsilon,S_{d,\bsgamma})=|A_{\varepsilon,d}|,\ \ \ 
\mbox{where}\ A_{\varepsilon,d}=\{(n_1,\ldots,n_d)\in \NN^d \ : \ \lambda_{1,n_1}\cdots \lambda_{d,n_d}> \varepsilon^2\}.
\end{equation}
Clearly, 
$$
n(\e,S_{d,\bsgamma})\le n(\e_1,S_{d_1,\bsgamma})\ \ \
\mbox{for all $\e_1\le \e$ and $d_1\ge d$}.
$$
Hence, for decreasing $\e$ and increasing $d$, the information complexity
is non-increasing.

In the sequel we will 
work with the representation of the information complexity in~\eqref{def:infocomp}. 
We show how weighted tensor product problems can be classified 
with respect to different notions of EXP tractability by means 
of the eigenvalues $\{\lambda_j\}_{j \in \NN}$ of the operator $W_1:=S^\ast
S: H_1 \rightarrow H_1$ and
of the weights $\{\gamma_j\}_{j \in \NN}$.
We remind the reader of what we assume about the $\lambda_j$'s and
$\gamma_j$'s. We have
\begin{equation}\label{def:ev}
1=\lambda_1 \ge \lambda_2 \ge \lambda_3 \ge \ldots \ge 0,\ \
\mbox{with $\lambda_2>0$}.
\end{equation}
Note that for $\lambda_2=0$ the problem becomes trivial
since $n(\e,S_{d,\bsgamma})=1$ for all $\e\in[0,1)$ and $d\in\NN$.
On the other hand, if $\lambda_1\not=1$ then the problem is not well 
normalized. In this case, the initial error $e_0$ is $\lambda_1^d$ and 
for  the normalized 
error criterion we may work with $\lambda_{k,n_j}/\lambda_1$ instead of $\lambda_{k,n_j}$. By assuming
that $\lambda_1=1$ we simplify the notation. 
Note also that $\lim_{j\to\infty}\lambda_j=0$ iff $S_1$ (as well as
$S_{d,\bsgamma}$) is  compact. This assumption implies that the
information complexity $n(\e,S_{d,\bsgamma})$ is finite for all $\e>0$
and all $d\in \NN$.   
 
For $\varepsilon \in (0,1)$ and $\lim_{j\to\infty}\lambda_j=0$, define 
\begin{equation}
 j(\varepsilon) =  \max\{j \in \NN \ : \ \lambda_j > \varepsilon^2\}.
\end{equation}
Then $j(\e)$ is well defined and always finite. Since $\lambda_1=1$, 
we have $j(\e)\ge1$.  
Furthermore, $j(\e)$  goes to
infinity if and only if all $\lambda_j$'s are positive.  
\vskip 1pc
We also assume that the weights satisfy 
\begin{equation}\label{posweights}
1\ge \gamma_1 \ge \gamma_2 \ge \gamma_3 \ge \ldots > 0.
\end{equation}
The ordering of the $\gamma_j$'s tells us that the successive subproblems 
are less and less important. The assumption that the weights are at
most one is made for simplicity to guarantee that $\lambda_{k,j}\le
\lambda_{k,1}=1$. 
The case of more general $\lambda_j$'s and
$\gamma_j$'s  is considered for algebraic tractability in \cite[Section~5.3]{NW08}.

For $\varepsilon\in(0,1)$ and
$\lim_{j\to\infty}\gamma_j=0$, define
$$
d(\varepsilon) =  \max\{d \in \NN \ : \ \gamma_d\,>\,\varepsilon^2\}.
$$
Then $d(\e)$ is well defined. We put 
$d(\e)=0$ for $\gamma_1\le\e^2$, and note that $d(\e)\ge1$ for $\gamma_1>\e^2$.
Both~$j(\e)$ and $d(\e)$ 
are non-decreasing, and 
$\lim_{\varepsilon\to0}d(\varepsilon)=\infty$. 

Now we are able to state our main result.
To shorten the notation we write ''iff'' instead of ``if and only
if''.

\begin{theorem}\label{thm1}
We have
\begin{enumerate}
\item EXP-SPT holds iff 
$$
\lim_{j\to\infty}\lambda_j=\lim_{j\to\infty}\gamma_j=0\ \ \ 
\mbox{and}\ \ \ 
B_{{\rm EXP-SPT}}:=\limsup_{\varepsilon \rightarrow 0}
\frac{d(\varepsilon) \log j(\varepsilon)}{\log\log
  \frac{1}{\varepsilon}}< \infty.$$  
If this holds then the exponent of EXP-SPT is $p^*=B_{{\rm EXP-SPT}}$.
\item EXP-SPT and EXP-PT are equivalent.
\item EXP-QPT holds iff 
$$
\lim_{j\to\infty}\lambda_j=\lim_{j\to\infty}\gamma_j=0\ \ \ 
\mbox{and}\ \ \ 
B_{{\rm EXP-QPT}}:=\limsup_{\varepsilon \rightarrow 0} 
\frac{d(\varepsilon) \log j(\varepsilon)}{[\log d(\varepsilon)]
\log\log \frac{1}{\varepsilon}}< \infty.
$$
If this holds then the exponent of EXP-QPT is $t^*=B_{{\rm EXP-QPT}}$.

\item  Let $s=t=1$. 

EXP-WT holds iff 
$$\lim_{j \rightarrow \infty} \gamma_j=0\ \ 
\mbox{ and }\ \ 
\lim_{j \rightarrow \infty} 
\frac{\log \frac{1}{\lambda_j}}{\log j}=\infty.
$$
\item  Let $s=1$  and $t<1$.

EXP-$(1,t)$-WT holds iff 
$$
\lim_{j \rightarrow \infty} 
\frac{\log \frac{1}{\gamma_j}}{\log j}=\infty\ 
\ \mbox{ and }\ \ \lim_{j \rightarrow \infty} 
\frac{\log \frac{1}{\lambda_j}}{\log j}=\infty.
$$
\item Let $s=1$ and $t>1$.

EXP-$(s,t)$-WT holds iff 
$$
\mbox{$\gamma_j$'s are arbitrary}\ \ \ \mbox{and}\ \ \ 
 \lim_{j \rightarrow \infty} \frac{\log \frac{1}{\lambda_j}}{\log
   j}=\infty.
$$
\item Let $s>1$, $t\le1$ and $\lambda_2<1$. 

EXP-$(s,t)$-WT holds iff 
$$
\mbox{$\gamma_j$'s are arbitrary}\ \ \ \mbox{and}\ \ \ 
\lim_{j \rightarrow \infty}\frac{(\log\frac{1}{\lambda_j})^s}{\log
  j}=\infty.
$$
\item
Let $s>1$, $t\le1$ and $\lambda_2=1$. 

EXP-$(s,t)$-WT holds 
iff 
$$
\exists\,p\in\NN \ \ \mbox{with}\ \
\gamma_p<1\ \ \ \mbox{and}\ \ \ 
\lim_{j \rightarrow \infty}\frac{(\log\frac{1}{\lambda_j})^s}{\log
  j}=\infty.
$$
\item  Let $s>1$ and $t>1$.

EXP-$(s,t)$-WT holds 
iff
$$
\mbox{$\gamma_j$'s are arbitrary}\ \ \ \mbox{and}\ \ \ 
\lim_{j \rightarrow \infty}\frac{(\log\frac{1}{\lambda_j})^s}{\log
  j}=\infty.
$$
\item
Let $s<1$ and $t>1$. 
%H For simplicity consider $\gamma_j$'s such that
%H there exists $p^*\in[0,\infty)$ such that
%H 
%H $$
%H \sum_{j=1}^d \log \frac1{\gamma_j}=\Theta(d^{p^*/s}).
%H $$
EXP-$(s,t)$-WT holds for arbitrary $\gamma_j$'s iff 
$$
\lim_{j\to\infty}\frac{\left(\log \frac1{\lambda_j}\right)^{\eta}}
{\log j}=\infty \ \ \mbox{with}\ \
%H s^*=\begin{cases}
\eta=\frac{s(t-1)}{t-s}.
%H &\ \ \mbox{if}\ k^*\le t,\\
%H\frac{s(k^*-1)}{k^*-s}&\ \ \mbox{if}\ k^*\in(t,\infty),\\
%H s& \ \ \mbox{if}\ k^*=\infty.
%H \end{cases}
%H 
$$
\item
Let $s<1$ and $t=1$.

EXP-$(s,1)$-WT holds iff for arbitrary integers $d,k,j$ with $j\ge 2$ and $k\le d$ it is true that
\begin{equation}\label{111}
\lim_{d+\gamma_k^{-d}\lambda_j^{-d}\to\infty}
\frac{\left(\log\frac1{\gamma_k}\right)^s+\left(\log\frac1\lambda_j\right)^s
}{d^{1-s}\log j}=\infty.
\end{equation}
\end{enumerate}
\end{theorem}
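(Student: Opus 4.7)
The plan is to establish both directions of the equivalence using the counting representation~\eqref{def:infocomp} of the information complexity, in the spirit of the arguments used for items~4--10.

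For the necessity direction, I assume EXP-$(s,1)$-WT and aim to derive~\eqref{111}. The key lower bound is this: for any $k\le d$ and any $j\ge 2$, each of the $j^k$ tuples $(n_1,\ldots,n_k,1,\ldots,1)$ with $n_i\in\{1,\ldots,j\}$ satisfies $\prod_{i=1}^d\lambda_{i,n_i}\ge\gamma_k^k\lambda_j^k$, by the monotonicity of $\{\gamma_i\}$ and $\{\lambda_j\}$. Consequently $n(\varepsilon,S_{d,\bsgamma})\ge j^k$ whenever $\varepsilon^2\le\gamma_k^k\lambda_j^k$. Given any sequence of triples $(d_m,k_m,j_m)$ with $k_m\le d_m$, $j_m\ge 2$, and $d_m+\gamma_{k_m}^{-d_m}\lambda_{j_m}^{-d_m}\to\infty$, I set $\varepsilon_m^{-2}=\gamma_{k_m}^{-d_m}\lambda_{j_m}^{-d_m}$; since $d_m\ge k_m$ and $\gamma_{k_m}\lambda_{j_m}\le 1$ one has $\varepsilon_m^2\le\gamma_{k_m}^{k_m}\lambda_{j_m}^{k_m}$, so the lower bound yields $\log n\ge k_m\log j_m$, while $d_m+\varepsilon_m^{-1}\to\infty$ by construction. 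Substituting into the defining limit
\[
\lim_{d+\varepsilon^{-1}\to\infty}\frac{\log\max(1,n(\varepsilon,S_{d,\bsgamma}))}{d+(1+\log\varepsilon^{-1})^s}=0,
\]
using $\log\varepsilon_m^{-1}=\tfrac{d_m}{2}\bigl(\log\gamma_{k_m}^{-1}+\log\lambda_{j_m}^{-1}\bigr)$ together with the sub-additivity $(x+y)^s\le x^s+y^s$ valid for $s\in(0,1]$, and rearranging, one obtains~\eqref{111}.

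For the sufficiency direction, I assume~\eqref{111} and prove EXP-$(s,1)$-WT. Specialising~\eqref{111} to $k=d=1$ first forces $(\log\lambda_j^{-1})^s/\log j\to\infty$ as $j\to\infty$, so $\lambda_j$ decays faster than every negative power of $j$; hence $\Sigma(\tau):=\sum_{j\ge 2}\lambda_j^{\tau}<\infty$ for every $\tau>0$. I then apply the standard Markov upper bound
\[
n(\varepsilon,S_{d,\bsgamma})\le\varepsilon^{-2\tau}\prod_{k=1}^{d}\bigl(1+\gamma_k^{\tau}\Sigma(\tau)\bigr),
\]
take logarithms, and use $\log(1+x)\le x$ to obtain $\log n\le 2\tau\log\varepsilon^{-1}+\Sigma(\tau)\sum_{k=1}^{d}\gamma_k^{\tau}$. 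A schedule $\tau=\tau(d,\varepsilon)\to 0$ as $d+\varepsilon^{-1}\to\infty$, tuned so that $\tau\log\varepsilon^{-1}=o((1+\log\varepsilon^{-1})^s)$ and $\Sigma(\tau)\sum_{k=1}^d\gamma_k^{\tau}=o(d+(1+\log\varepsilon^{-1})^s)$, then gives the desired $\log n(\varepsilon,S_{d,\bsgamma})=o\bigl(d+(1+\log\varepsilon^{-1})^s\bigr)$; the necessary rates for $\tau$, for $\Sigma(\tau)$, and for the partial sums of $\gamma_k^{\tau}$ are exactly what~\eqref{111} provides by letting $(d,k,j)$ vary appropriately.

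The hardest part is the sufficiency, specifically the selection of the exponent $\tau$. Because $s<1$, the prefactor $\varepsilon^{-2\tau}$ contributes $2\tau\log\varepsilon^{-1}$ to the log-bound, which is $o((1+\log\varepsilon^{-1})^s)$ only when $\tau$ shrinks quickly, and this in turn blows up $\Sigma(\tau)$; avoiding this blow-up requires the super-polynomial decay of $\lambda_j$ encoded in~\eqref{111}. Analogously the tail $\sum_{k=1}^d\gamma_k^{\tau}$ can grow with $d$ and must be kept within $o(d)$ by appealing to the $k$-dependence in~\eqref{111}. Constructing a single schedule $\tau=\tau(d,\varepsilon)$ that works simultaneously in all regimes (large $d$ with moderate $\varepsilon$, small $d$ with extremely small $\varepsilon$, and the mixed intermediate regime where both summands $d$ and $(1+\log\varepsilon^{-1})^s$ are comparable) will be the technical crux of the proof.
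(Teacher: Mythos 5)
Your proposal addresses only Item~\emph{11.}\ of the eleven-part theorem; Items~\emph{1.}--\emph{10.}\ are merely alluded to as ``the arguments used for items 4--10'' and are never proved, so as a proof of the stated theorem the attempt is incomplete from the outset. Even restricted to Item~\emph{11.}, both directions have gaps. In the necessity direction, your lower bound $n(\varepsilon,S_{d,\bsgamma})\ge j^k$ (from the $j^k$ tuples $(n_1,\dots,n_k,1,\dots,1)$) is too weak to produce \eqref{111}. The paper takes $\varepsilon^2=\gamma_k^d\lambda_j^d\alpha$ and works with the bound $\log n\ge d\log j$, so that in
$\bigl(d+(1+\log\varepsilon^{-1})^s\bigr)/\log n\to\infty$ the additive term contributes only $d/(d\log j)=1/\log j$, which is bounded; hence the term $(\log\varepsilon^{-1})^s\asymp d^s\bigl[(\log\gamma_k^{-1})^s+(\log\lambda_j^{-1})^s\bigr]$ is forced to dominate, and dividing by $d\log j$ gives exactly the $d^{1-s}\log j$ denominator of \eqref{111}. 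With your denominator $k\log j$ the term $d/(k\log j)$ can tend to infinity by itself (take $k$ and $j$ fixed and $d\to\infty$), and then the divergence of the whole ratio tells you nothing about $\bigl[(\log\gamma_k^{-1})^s+(\log\lambda_j^{-1})^s\bigr]/(d^{1-s}\log j)$. The final ``rearranging, one obtains \eqref{111}'' therefore does not go through.

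In the sufficiency direction you take a genuinely different route from the paper: a Chebyshev--Markov bound $n(\varepsilon,S_{d,\bsgamma})\le\varepsilon^{-2\tau}\prod_{k=1}^d(1+\gamma_k^\tau\Sigma(\tau))$ with a sliding exponent $\tau(d,\varepsilon)\to0$, whereas the paper invokes the criterion \eqref{condWT} from \cite[Theorem~3]{KW19} and uses Lemma~\ref{le3} twice to factor the sum into $\prod_{k=1}^d\bigl(1+\sum_{j\ge2}{\rm e}^{-c(2d)^{s-1}[(\log\gamma_k^{-1})^s+(\log\lambda_j^{-1})^s]}\bigr)$, after which \eqref{111} makes each factor $1+o(c)$ directly. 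Your route is not implausible, but you have explicitly deferred its entire technical content: the construction of a single schedule $\tau(d,\varepsilon)$ satisfying simultaneously $\tau\log\varepsilon^{-1}=o((1+\log\varepsilon^{-1})^s)$ and $\Sigma(\tau)\sum_{k\le d}\gamma_k^\tau=o(d+(1+\log\varepsilon^{-1})^s)$ is exactly the point where \eqref{111} must be converted into quantitative tail bounds on $\{\lambda_j\}$ and $\{\gamma_k\}$ (note that $\Sigma(\tau)\to\infty$ and $\gamma_k^\tau\to1$ as $\tau\to0$, so the two requirements pull in opposite directions). Asserting that ``the necessary rates \dots are exactly what \eqref{111} provides'' is the claim to be proved, not a proof. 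As written, the sufficiency part is a plan rather than an argument, and the key quantitative step is missing.
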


Before we present the proof of Theorem \ref{thm1} we illustrate some of the results and discuss their meaning.
 
\begin{example}\rm
Let $\lambda_j=\exp(-\exp(j^{\alpha}))$ and $\gamma_j=\exp(-\exp(j^{\beta}))$ for positive $\alpha$ and  $\beta$. Then we have $$j(\varepsilon)=\left\lceil \left(\log\log \frac{1}{\varepsilon^2}\right)^{1/\alpha}\right\rceil -1 \ \ \mbox{ and }\ \ d(\varepsilon)=\left\lceil \left(\log\log \frac{1}{\varepsilon^2}\right)^{1/\beta}\right\rceil -1$$ and hence
\begin{itemize}
 \item $\alpha >0$ and $\beta>1$ imply EXP-SPT with $p^*=0$;
 \item $\alpha >0$ and $\beta=1$ imply EXP-QPT with $t^*=1$, 
but EXP-SPT does not hold.
\end{itemize}
\end{example}

Note that Items {\it 4.}-{\it 11.} of Theorem~\ref{thm1} give a full characterization of EXP-$(s,t)$-WT for all $(s,t) \in (0,\infty)^2 \setminus (0,1)^2$. The following remarks are in order.

\begin{remark}\rm
The condition $$ \lim_{j \rightarrow \infty} \frac{\log \frac{1}{\lambda_j}}{\log j}=\infty$$ is satisfied if and only if $\lambda_j$ is of the form $$\lambda_j=\frac{1}{j^{h(j)}}$$ where $h:\NN \rightarrow \RR$ satisfies $\lim_{j \rightarrow \infty} h(j)=\infty$.  So, for example, we have EXP-WT if $\lim_{j \rightarrow \infty}\gamma_j=0$ and $\lambda_1=1$, $\lambda_2=0.95$ and $\lambda_j=j^{-\log\log j}$ for $j \ge 3$.
\end{remark}

\begin{remark}\rm
Consider $s>1$, $t\le1$ and $\lambda_2\le1$ described in Items {\it 7.} and {\it 8.} 
of Theorem~\ref{thm1}:

For $\lambda_2<1$, we have a single largest eigenvalue and
EXP-$(s,t)$-WT holds for arbitrary $\gamma_j$'s as long as
$\lim_{j \rightarrow \infty}(\log \lambda_j^{-1})^s/\log j=\infty$. In particular, this holds
for the un-weighted case,
$\gamma_j=1$ for all $j \in \NN$.
 
For $\lambda_2=1$, we have a multiple largest eigenvalue and
EXP-$(s,t)$-WT holds under the same conditions on the $\lambda_j$'s but
now we need to assume that not all $\gamma_j$'s are one.
In particular, this holds for
$$
1=\gamma_1=\ldots=\gamma_{p-1}>\gamma_p=\gamma_{p+1}=\ldots>0.
$$  
\end{remark}
\begin{remark}\rm
Consider $s\ge1$ and $t>1$ described in Items {\it 6.} and {\it 9.} 
of Theorem \ref{thm1}. Then EXP-$(s,t)$-WT holds for arbitrary 
$\gamma_j$'s, i.e., even for the un-weighted case $\gamma_j=1$,
and for $\lambda_j$ satisfying the same condition as before.
This case was proved in \cite{HKW19}.
\end{remark}

\begin{remark}\rm 
We briefly note what happens if some weights in
\eqref{posweights} are zero, say $\gamma_{j^*}=0$ for some $j^*\in\NN$.
Obviously, monotonicity of the $\gamma_j$'s implies that $\gamma_j=0$ 
for all $j\ge j^*$. 
Then $\gamma_{\uu}=0$ for all $\uu$ containing  one or more indices
 at least equal to $j^*$.  For such $\uu$, we must assume in
\eqref{nonzeroweights} that $f_{\uu}=0$ and adopt 
the convention that $0/0=0$. In this case, $H_{d,\bsgamma}$ is
algebraically a proper subset of $H_d$. 

Assume first that $j^*=1$. Then the only non-zero eigenvalues are
$\lambda_{k,1}=1$. This means that the problem is trivial since 
$n(\e,S_{d,\bsgamma})=1$ for all $\e\in[0,1)$ and $d\in \NN$.

Assume then that $j^*\ge 2$. It is easy to check that we now have
$$
n(\e,S_{d,\bsgamma})\le n(\e,S_{j^*-1,\bsgamma})
\ \ \ \mbox{for all $\e\in (0,1)$ and $d\in\NN$.}
$$ 
Hence, $d(\e)\le j^*-1$ and $d(\e)=j^*-1$ for
$\e<\gamma_{j^*-1}^{1/2}$. The factors $d(\e)$
and $d(\e)/\log\,d(\e)$ cannot change the fact that $B_{\rm EXP-SPT}$ or
$B_{\rm EXP-QPT}$ are finite, and then
EXP-SPT, EXP-PT and EXP-QPT are equivalent.
\end{remark}

\section{The proofs}\label{sec:proof}

We first show how the information complexity can be bounded in terms
of $j(\varepsilon)$ and $d(\varepsilon)$. 

\begin{lemma}\label{le1}
If $\lim_{j\to\infty}\lambda_j=\lim_{j\to\infty}\gamma_j=0$ then
for $\varepsilon\in (0,1)$ we have 
$$
n(\varepsilon,S_{d,\bsgamma}) \le j(\varepsilon)^{\min(d,d(\varepsilon))} 
\le n(\varepsilon^{2 d(\varepsilon)},S_{d(\varepsilon),\bsgamma}),
$$
and for $d\ge d(\varepsilon)$ 
$$
n(\varepsilon,S_{d,\bsgamma})=n(\varepsilon,S_{d(\varepsilon),\bsgamma}).
$$
\end{lemma}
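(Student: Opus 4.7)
The plan is to work directly with the combinatorial description $n(\varepsilon, S_{d,\bsgamma}) = |A_{\varepsilon, d}|$ from \eqref{def:infocomp}, where $A_{\varepsilon,d}$ consists of the tuples $(n_1,\ldots,n_d)\in\NN^d$ with $\prod_{k=1}^d\lambda_{k,n_k}>\varepsilon^2$. The key observation is that every factor $\lambda_{k,n_k}$ is at most $1$, so a tuple in $A_{\varepsilon,d}$ must have each individual factor strictly greater than $\varepsilon^2$; this ties the problem directly to the thresholds $j(\varepsilon)$ and $d(\varepsilon)$.

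For the upper bound $n(\varepsilon,S_{d,\bsgamma})\le j(\varepsilon)^{\min(d,d(\varepsilon))}$ I split the coordinates into two ranges. For $k>d(\varepsilon)$, if $n_k\ge 2$ then $\lambda_{k,n_k}=\gamma_k\lambda_{n_k}\le\gamma_{d(\varepsilon)+1}\le \varepsilon^2$, which would force the whole product to be at most $\varepsilon^2$; so $n_k=1$ is forced. For $k\le\min(d,d(\varepsilon))$, the same reasoning requires $\lambda_{n_k}>\varepsilon^2$ whenever $n_k\ge 2$, hence $n_k\in\{1,2,\ldots,j(\varepsilon)\}$. Counting the admissible tuples gives the claimed bound. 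In fact, when $d\ge d(\varepsilon)$ the same analysis shows that truncating a tuple in $A_{\varepsilon,d}$ to its first $d(\varepsilon)$ entries, and conversely padding with ones, is a bijection between $A_{\varepsilon,d}$ and $A_{\varepsilon,d(\varepsilon)}$, which establishes the last identity of the lemma.

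For the lower bound $j(\varepsilon)^{\min(d,d(\varepsilon))}\le n(\varepsilon^{2d(\varepsilon)},S_{d(\varepsilon),\bsgamma})$ I construct an explicit family of tuples. Any $(n_1,\ldots,n_{d(\varepsilon)})\in\{1,\ldots,j(\varepsilon)\}^{d(\varepsilon)}$ has every factor either equal to $1$ or at least $\gamma_{d(\varepsilon)}\lambda_{j(\varepsilon)}>\varepsilon^2\cdot\varepsilon^2=\varepsilon^4$, so the product exceeds $\varepsilon^{4d(\varepsilon)}=(\varepsilon^{2d(\varepsilon)})^2$. This exhibits $j(\varepsilon)^{d(\varepsilon)}$ elements of $A_{\varepsilon^{2d(\varepsilon)},d(\varepsilon)}$, and since $j(\varepsilon)\ge 1$ this also dominates $j(\varepsilon)^{\min(d,d(\varepsilon))}$.

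The statement is mechanical, and I expect no conceptual obstacle. The only care required is with strict versus non-strict inequalities: one must remember that $\lambda_{j(\varepsilon)}>\varepsilon^2$ and $\gamma_{d(\varepsilon)}>\varepsilon^2$ are strict by the definition of the maxima, whereas $\lambda_{j(\varepsilon)+1}\le\varepsilon^2$ and $\gamma_{d(\varepsilon)+1}\le\varepsilon^2$, and this asymmetry is what makes the product estimates propagate in both directions.
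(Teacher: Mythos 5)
Your proof is correct and follows essentially the same route as the paper: the same combinatorial representation $n(\varepsilon,S_{d,\bsgamma})=|A_{\varepsilon,d}|$, the same forcing of $n_k=1$ for $k>d(\varepsilon)$ and $n_k\le j(\varepsilon)$ otherwise, and the same explicit family of tuples in $\{1,\ldots,j(\varepsilon)\}^{d(\varepsilon)}$ with product exceeding $\varepsilon^{4d(\varepsilon)}$ for the lower bound. The only cosmetic difference is that you spell out the truncation/padding bijection for the case $d\ge d(\varepsilon)$, which the paper states more briefly.
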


\begin{proof}
We use \eqref{def:infocomp}. Consider the eigenvalue $\lambda_{1,n_1} \lambda_{2,n_2}\cdots \lambda_{d,n_d}$. 
\begin{itemize}
\item If $n_k \ge j(\varepsilon)+1$ (in particular $n_k \ge 2$) for some $k \le d$, then we have $$ \lambda_{1,n_1} \lambda_{2,n_2}\cdots \lambda_{d,n_d} \le \lambda_{k,n_k}=\gamma_k \lambda_{n_k} \le \lambda_{n_k} \le \varepsilon^2$$ so that $(n_1,\ldots,n_d)\not\in A_{\varepsilon,d}$.
\item If $d \ge d(\varepsilon)+1$ and $n_d \ge 2$, then $$ \lambda_{1,n_1} \lambda_{2,n_2}\cdots \lambda_{d,n_d} \le  \gamma_d \lambda_2 \le \gamma_d \le \varepsilon^2$$ and again $(n_1,\ldots,n_d)\not\in A_{\varepsilon,d}$.
\end{itemize}
Hence, only 
$$
(n_1,n_2,\dots,n_{\min(d,d(\varepsilon))},1,1,\dots,1)\in \NN^d
$$
for $n_j\in\{1,2,\dots,j(\e)\}$ with $j=1,2,\dots,\min(d,d(\varepsilon))$  
may belong to $A_{\varepsilon,d}$, and therefore  
$$
n(\varepsilon,S_{d,\bsgamma})=|A_{\varepsilon,d}| \le
j(\varepsilon)^{\min(d,d(\varepsilon))}.
$$ 
Furthermore for $d\ge d(\e)$, we have 
$$
n(\e,S_{d,\bsgamma})=|A_{\e,d(\e)}|=n(\e,S_{d(\e),\bsgamma}),
$$
as claimed.

In order to show the remaining  inequality we consider the eigenvalues 
$$
\mbox{$\lambda_{1,n_1} \lambda_{2,n_2}\cdots 
\lambda_{d(\varepsilon),n_{d(\varepsilon)}}$ 
for $n_j\in \{1,\ldots,j(\varepsilon)\}$.}
$$ 
For these eigenvalues we have $$\lambda_{1,n_1}\lambda_{2,n_2}\cdots
\lambda_{d(\varepsilon),n_{d(\varepsilon)}} \ge
(\gamma_{d(\varepsilon)} \lambda_{j(\varepsilon)})^{d(\varepsilon)} >
\varepsilon^{4d(\varepsilon)}.$$ This implies that we have at least
$j(\varepsilon)^{d(\varepsilon)}$ eigenvalues no less than
$\varepsilon^{4 d(\varepsilon)}$. 
Hence 
$$
j(\varepsilon)^{\min(d,d(\varepsilon))}\le j(\varepsilon)^{d(\varepsilon)} 
\le n(\varepsilon^{2 d(\varepsilon)},S_{d(\varepsilon),\bsgamma}).
$$
This completes the proof.
\end{proof}
\vskip 1pc
The next technical lemma will help to state the conditions for various 
notions of exponential tractability in a concise form.

\begin{lemma}\label{le2}
Let $\{a_j\}_{j \in \NN}$ be a non-increasing 
sequence of positive reals. Then we have 
\begin{equation}\label{le2eq1}
M_c:=\sum_{j=1}^{\infty} a_j^c < \infty \ \ \ \ \mbox{ for all $c>0$}
\end{equation}
if and only if 
\begin{equation}\label{le2eq2}
\lim_{j \rightarrow \infty}\frac{\log \frac{1}{a_j}}{\log j}=\infty.
\end{equation}
\end{lemma}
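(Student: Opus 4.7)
The plan is to prove the two implications by elementary means, leveraging monotonicity of $\{a_j\}$ for one direction and taking an appropriately large exponent for the other.

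For the direction \eqref{le2eq2} $\Rightarrow$ \eqref{le2eq1}, I would fix an arbitrary $c>0$ and choose $K=2/c$. Since $\log(1/a_j)/\log j\to\infty$, there exists $j_0$ such that for $j\ge j_0$, $\log(1/a_j)\ge K\log j$, equivalently $a_j\le j^{-K}$. Hence $a_j^c\le j^{-2}$ for all $j\ge j_0$, which makes $\sum_{j=1}^\infty a_j^c$ convergent by comparison with $\sum j^{-2}$.

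For the converse, \eqref{le2eq1} $\Rightarrow$ \eqref{le2eq2}, the key trick is to exploit the monotonicity of $\{a_j\}$: for any fixed $c>0$ and any $N\in\NN$ we have
$$
M_c \;=\; \sum_{j=1}^\infty a_j^c \;\ge\; \sum_{j=1}^N a_j^c \;\ge\; N\, a_N^c,
$$
because $a_j\ge a_N$ for $j\le N$. Solving for $a_N$ yields $a_N\le (M_c/N)^{1/c}$, and taking logarithms gives
$$
\frac{\log(1/a_N)}{\log N} \;\ge\; \frac{1}{c}\;-\;\frac{\log M_c}{c\,\log N}.
$$
Letting $N\to\infty$ we obtain $\liminf_{N\to\infty}\log(1/a_N)/\log N\ge 1/c$. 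Since $c>0$ was arbitrary and $M_c$ is finite for each such $c$ by hypothesis, sending $c\to 0^+$ forces $\liminf_{N\to\infty}\log(1/a_N)/\log N=\infty$, which is exactly \eqref{le2eq2}.

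There is no real obstacle in either direction; the only subtlety to state carefully is that in the second step the constant $M_c$ depends on $c$ but not on $N$, so for each fixed $c$ the term $\log M_c/(c\log N)$ vanishes as $N\to\infty$, and only afterwards do we let $c\to 0^+$ to conclude. The monotonicity of the sequence is used exactly once, in the bound $\sum_{j=1}^N a_j^c\ge N a_N^c$, and without it the converse would fail.
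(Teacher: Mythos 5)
Your proof is correct and follows essentially the same route as the paper: the monotonicity bound $N a_N^c \le \sum_{j=1}^N a_j^c \le M_c$ followed by letting $c\to 0^+$ for one direction, and the comparison $a_j^c \le j^{-2}$ (via the threshold $2/c$ on $\log(1/a_j)/\log j$) for the other. Nothing further is needed.
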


\begin{proof}
Assume that \eqref{le2eq1} holds. We have $$n a_n^c \le
a_1^c+\cdots+a_n^c \le M_c$$ and hence
$$\frac{1}{a_n} \ge \frac{n^{1/c}}{M_c^{1/c}}.$$ Taking the
logarithm we obtain $$ \log \frac{1}{a_n} 
\ge \frac{1}{c} \log n - \frac{1}{c}\log M_c$$ and therefore  
$$\liminf_{n \rightarrow \infty}\frac{\log \frac{1}{a_n}}{\log n} \ge \frac{1}{c}.$$ Now \eqref{le2eq2} follows by letting $c \rightarrow 0$.

If \eqref{le2eq2} holds then for every $c>0$ there exists a number $j_c>0$ such that $$\frac{\log \frac{1}{a_j}}{\log j} \ge \frac{2}{c}\ \ \ \mbox{for all $j \ge j_c$.}$$ This implies $$a_j^c \le \frac{1}{j^2} \ \ \ \mbox{for all $j \ge j_c$.}$$ Hence \eqref{le2eq1} holds.
\end{proof}

\begin{lemma}\label{le3}
For $s,a_1,a_2,\dots,a_m\ge0$ and $m\in\NN$, we have
$$
(a_1+\dots+a_m)^s=\alpha_{s,m}\left(a_1^s+\dots +a_m^s\right)
\ \ \ \mbox{with} \ \ \ 0^0=1,
$$
where $\alpha_{s,m}$ also depends on $a_1,\dots,a_m$ but is uniformly
bounded in the $a_j$'s,
$$
\alpha_{s,m}\in[1,m^{s-1}]\ \ \mbox{for $s\ge1$}\ \ \ 
\mbox{and}\ \ \ \alpha_{s,m}\in[m^{s-1},1]\ \ \mbox{for $s<1$}. 
 $$
\end{lemma}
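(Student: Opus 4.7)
The plan is to define $\alpha_{s,m}:=(a_1+\cdots+a_m)^s/(a_1^s+\cdots+a_m^s)$ whenever the denominator is positive, handle the degenerate case $a_1=\cdots=a_m=0$ separately, and then establish the four stated bounds as two instances of Jensen's inequality applied to $x\mapsto x^s$. In other words, everything reduces to the convexity/concavity dichotomy of the power function.

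For $s\ge 1$, the function $x\mapsto x^s$ is convex on $[0,\infty)$. The lower bound $\alpha_{s,m}\ge 1$ will follow from the pointwise inequality $a_j^s\le (a_1+\cdots+a_m)^{s-1}a_j$ (valid since $a_j\le a_1+\cdots+a_m$ and $s-1\ge 0$), summed over $j=1,\ldots,m$. The upper bound $\alpha_{s,m}\le m^{s-1}$ is the standard power-mean inequality
$$\left(\frac{a_1+\cdots+a_m}{m}\right)^s\le\frac{a_1^s+\cdots+a_m^s}{m},$$
which is Jensen's inequality for the convex function $x\mapsto x^s$.

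For $0\le s<1$, the function $x\mapsto x^s$ is concave on $[0,\infty)$, so Jensen's inequality reverses and yields $\alpha_{s,m}\ge m^{s-1}$. The matching upper bound $\alpha_{s,m}\le 1$ is subadditivity of $x\mapsto x^s$, obtained by iterating $(a+b)^s\le a^s+b^s$ for $a,b\ge 0$; the latter reduces, after dividing by $(a+b)^s$ (when positive), to $x^s+(1-x)^s\ge x+(1-x)=1$ for $x\in[0,1]$, which holds because $y^s\ge y$ for $y\in[0,1]$ and $s\in[0,1]$.

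The only care required is the degenerate case. When $s>0$ and all $a_j=0$, both sides of the identity vanish and any $\alpha_{s,m}$ in the stated interval works. When $s=0$, the convention $0^0=1$ gives $(a_1+\cdots+a_m)^0=1$ and $\sum_j a_j^0=m$, so $\alpha_{0,m}=m^{-1}=m^{s-1}$, which sits precisely at the common endpoint of both claimed intervals. I do not foresee a genuine obstacle; the lemma is essentially the power-mean inequality, and the only subtlety is the bookkeeping around the $0^0=1$ convention.
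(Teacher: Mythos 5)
Your proposal is correct and follows essentially the same route as the paper: the two-sided bounds you derive are exactly the well-known inequalities $m^{1-s}(a_1+\dots+a_m)^s\le a_1^s+\dots+a_m^s\le(a_1+\dots+a_m)^s$ for $s\ge1$ (reversed for $s<1$) that the paper simply quotes as known and then rewrites as the identity defining $\alpha_{s,m}$. The only difference is that you supply short proofs of these standard facts (Jensen/power-mean and subadditivity) and treat the degenerate cases $a_1=\dots=a_m=0$ and $s=0$ explicitly, which the paper leaves implicit.
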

\begin{proof}
It is well known that for $s>1$ we have
$$
m^{1-s}(a_1+\dots+a_m)^s\le a_1^s+\dots+a_m^s\le (a_1+\dots+a_m)^s,
$$
whereas for $s<1$ we have
$$
(a_1+\dots+a_m)^s\le a_1^s+\dots+a_m^s\le m^{1-s}(a_1+\dots+a_m)^s.
$$
This can be rewritten as
$$
(a_1+\dots+a_m)^s=\alpha_{s,m}(a_1^s+\dots+a_m^s)
$$
with $\alpha_{s,m}$ satisfying the bounds in Lemma \ref{le3}.
\end{proof}
We need a necessary condition on EXP-$(s,t)$-WT.
\begin{lemma}\label{le4}
For any positive $s,t$ and integers $k_1,k_2,\dots,k_d$ with
$k_j\ge2$, 
EXP-$(s,t)$-WT implies that
$$
\lim_{d+\max_{j \in [d]}k_j\to\infty}\frac{d^t+\left(\sum_{j=1}^d\log
    \frac1{\gamma_j}\right)^s+\left(\sum_{j=1}^d\log\frac1{
\lambda_{k_j}}\right)^s}
{\sum_{j=1}^d\log\,k_j}=\infty. 
$$
\end{lemma}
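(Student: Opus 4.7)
The plan is to derive a lower bound on the information complexity $n(\e,S_{d,\bsgamma})$ by exhibiting many eigenvalues that exceed $\e^2$, and then to invoke the EXP-$(s,t)$-WT hypothesis. Given $d$ and integers $k_1,\dots,k_d\ge 2$, I set $\e^2:=\tfrac12\prod_{k=1}^d \gamma_k\lambda_{k_k}$; since each factor lies in $(0,1]$, this $\e$ lies in $(0,1)$. For any multi-index $\bsj=(j_1,\dots,j_d)$ with $j_k\in\{1,\dots,k_k\}$, monotonicity of the $\lambda_j$'s together with the bound $\gamma_k\lambda_{k_k}\le 1$ give $\lambda_{d,\bsgamma,\bsj}\ge \prod_{k=1}^d \gamma_k\lambda_{k_k}=2\e^2>\e^2$. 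By \eqref{eq:infocomp_alt} this yields $\prod_{k=1}^d k_k$ indices counted by $n(\e,S_{d,\bsgamma})$, so
\[ \log n(\e,S_{d,\bsgamma})\ge \sum_{k=1}^d \log k_k. \]

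Next, I observe that the hypothesis $d+\max_{j\in[d]} k_j\to\infty$ forces $d+\e^{-1}\to\infty$: either $d$ itself diverges, or $d$ stays bounded while some $k_j\to\infty$, in which case $\lambda_{k_j}\to 0$ (by compactness of $S_1$) drives $\e\to 0$. EXP-$(s,t)$-WT applied along this sequence of pairs $(d,\e)$ therefore gives
\[ \frac{d^t+(1+\log\e^{-1})^s}{\log n(\e,S_{d,\bsgamma})}\longrightarrow\infty. \]
Using $\log\e^{-1}=\tfrac12\log 2+\tfrac12\sum_{k=1}^d\log\gamma_k^{-1}+\tfrac12\sum_{k=1}^d\log\lambda_{k_k}^{-1}$ and Lemma~\ref{le3} applied to these four non-negative summands, I obtain
\[ (1+\log\e^{-1})^s\le c_s\Bigl(1+\Bigl(\sum_{k=1}^d \log\gamma_k^{-1}\Bigr)^s+\Bigl(\sum_{k=1}^d \log\lambda_{k_k}^{-1}\Bigr)^s\Bigr) \]
for a constant $c_s$ depending only on $s$.

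Substituting this upper bound on $(1+\log\e^{-1})^s$ together with the lower bound on $\log n(\e,S_{d,\bsgamma})$ into the preceding limit, and dividing through by $\sum_{k=1}^d\log k_k$, which diverges under $d+\max k_j\to\infty$ since every $k_j\ge 2$, produces the asserted limit once the constant $c_s$ and the additive $1$ are absorbed into the denominator. The main technical point is ensuring that these constant factors do not destroy the divergence; this reduces to the elementary observation that $(A+c_s B)/C\le \max(1,c_s)\cdot(A+B)/C$ for non-negative $A,B,C$, so divergence of the left-hand ratio implies divergence of the right-hand one.
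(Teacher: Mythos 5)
Your proposal is correct and follows essentially the same route as the paper's proof: you choose $\e^2$ as a constant multiple ($\alpha=\tfrac12$) of $\prod_{k}\gamma_k\lambda_{k_k}$, lower-bound $n(\e,S_{d,\bsgamma})$ by $\prod_k k_k$ via the product eigenvalues, note that $d+\max_j k_j\to\infty$ forces $d+\e^{-1}\to\infty$, and then split $(1+\log\e^{-1})^s$ with Lemma~\ref{le3} and absorb the constants. The only differences are cosmetic (a fixed $\alpha=\tfrac12$ and applying Lemma~\ref{le3} to four summands rather than three), so no further comment is needed.
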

\begin{proof}
EXP-$(s,t)$-WT implies that
$$
\lim_{d+\e^{-1}\to\infty}
\frac{\log n(\e,S_{d,\bsgamma})}{d^t+\left(\log
\e^{-1}\right)^s}=0.
$$
Take $\e^2=\gamma_1\lambda_{k_1}\cdots\gamma_d\lambda_{k_d}\alpha$
with $\alpha<1$ . We take $k_d$ large enough so that $\e<1$. Then 
$\lambda_{1,j_1}\cdots\lambda_{d,j_d}>\e^2$ for all
$j_{1}=1,2,\dots,k_1,\ j_2=1,2,\dots,k_2$ and $j_d=1,2,\dots,k_d$.
 Hence $n(\e,S_{d,\bsgamma})\ge 
\prod_{j=1}^dk_j$ and
$$
\lim_{d+\e^{-1}\to\infty}\frac{\sum_{j=1}^d\log k_j}{d^t
+\frac{1}{2^s}\left(\sum_{j=1}^d\log\frac1{\gamma_j}\,+\,
\sum_{j=1}^d\log\frac1{\lambda_{k_j}}\,+\,\log\frac1{\alpha}\right)^s}=0.
$$
Applying now Lemma \ref{le3} with $m=3$ we obtain
\begin{eqnarray*}
\lefteqn{\left(\sum_{j=1}^d\log\frac1{\gamma_j}\,+\,
\sum_{j=1}^d\log\frac1{\lambda_{k_j}}\,+\,\log\frac1{\alpha}\right)^s}\\
& =& \left(
\left(\sum_{j=1}^d\log\frac1{\gamma_j}\right)^s\,+\,
\left(\sum_{j=1}^d\log\frac1{\lambda_{k_j}}\right)^s\,+\,\left(
\log\frac1{\alpha}\right)^s\right)\alpha_{s,3}.
\end{eqnarray*}

Note that $d+\e^{-1}\to \infty$ is equivalent to $d+\max_{j\in [d]}k_j\to\infty$.
Taking the reciprocal this yields
$$
\lim_{d+\max_{j\in [d]}k_j\to\infty}
\frac{d^t+\frac{\alpha_{s,3}}{2^s}
\left(\left(\sum_{j=1}^d\log\frac1{\gamma_j}\right)^s
+\left(\sum_{j=1}^d\log \frac1{\lambda_{k_j}}\right)^s
+\left(\log\frac1{\alpha}\right)^s\right)}{\sum_{j=1}^d\log \,k_j}=\infty.
$$
Since $(\log \frac1{\alpha})^s/(\sum_{j=1}^d \log k_j)$ tends to zero, 
and since we may increase the numerator of the last expression by
multiplying $d^t$ by $\max(1,\alpha_{s,3}/2^s)$, we obtain Lemma \ref{le4}.
\end{proof}

We are ready to turn to the proof of the main result of the paper.

\begin{proof}[Proof of Theorem~\ref{thm1}]
$\quad$

\begin{enumerate}
\item Assume first that 
$\lim_{j \rightarrow \infty}\lambda_j=\lim_{j \rightarrow \infty}\gamma_j=0$ and 
$B:=B_{{\rm EXP-SPT}} < \infty$. Let $\delta \in (0,\infty)$. Then
from the definition of $B$ we have that there exists an
$\varepsilon_{\delta}\in(0,\gamma_1^{1/2}/{\rm e})$, with ${\rm e}=\exp(1)$,
such that 
$$\log
j(\varepsilon)^{d(\varepsilon)} \le \log \left( \log
  \frac{1}{\varepsilon}\right)^{B+\delta} \ \ \ \mbox{ for all
  $\varepsilon \in (0,\varepsilon_{\delta}]$.}
$$ 
Hence, according to
Lemma~\ref{le1} we have 
$$
n(\varepsilon,S_{d,\bsgamma}) \le 
\left( \log
  \frac{1}{\varepsilon}\right)^{B+\delta}\le   
\left(1+ \log
  \frac{1}{\varepsilon}\right)^{B+\delta}  
\ \ \ \mbox{ for all
  $\varepsilon \in (0,\varepsilon_{\delta}]$}.
$$ 
Consider now $\e\in[\e_\delta,1]$  and $d\in\NN$. Let
$$
C_\delta=\left(1+\log \frac1{\e_\delta}\right)^{B+\delta}.
$$
Since 
$$
\left(1+\log \frac1{\e_\delta}\right)^{B+\delta}\le C_\delta
\left(1+\log \frac1{\e}\right)^{B+\delta},
$$
we have
$$
n(\e,S_{d,\bsgamma})
\le 
n(\e_\delta,S_{d,\bsgamma})
\le
\left(1+\log \frac1{\e_\delta}\right)^{B+\delta}\le 
C_\delta \left(1+\log \frac1{\e}\right)^{B+\delta}.
$$
Hence,
$$
n(\e,S_{d,\bsgamma})
\le C_\delta \left(1+\log \frac1{\e}\right)^{B+\delta}
\ \ \ \mbox{for all $\e\in(0,1)$ and $d\in\NN$},
$$
 which means EXP-SPT with the exponent $p^*\le B$.
\vskip 1pc
On the other hand, assume that we have EXP-SPT with the exponent $p^*<
\infty$. Hence, for every $\delta>0$ there exists a number $C_{\delta}$
such that for all $\varepsilon \in (0,1)$ and $d\in\NN$  
$$
n(\varepsilon,S_{d,\bsgamma})\le C_\delta\left(1+\log\frac1{\varepsilon}
\right)^{p^*+\delta}.
$$

We first show that $\lim_{j \rightarrow \infty}\lambda_j=\lim_{j \rightarrow \infty}\gamma_j=0$ which is even
known for the algebraic complexity. For completeness we provide a
short proof.
 
The condition $\lim_{j \rightarrow \infty}\lambda_j=0$ easily follows from the compactness of
$S_{d,\bsgamma}$ since otherwise $n(\varepsilon,S_{d,\bsgamma})=\infty$ for
small positive $\varepsilon$. 

The condition $\lim_{j \rightarrow \infty}\gamma_j=0$ is also easy to show since otherwise
due to the monotonicity of the $\gamma_j$'s we have $\lim_{j \rightarrow \infty}\gamma_j=\gamma^*>0$.
Then we can take $2^d$ eigenvalues
$\lambda_{1,n_1}\lambda_{2,n_2}\cdots\lambda_{d,n_d}$ with
$n_j\in\{1,2\}$. Then each such eigenvalue is at least 
$(\gamma^*\lambda_2)^d$ and 
\begin{equation}\label{gammazero}
n(\tfrac12(\gamma^*\lambda_2)^{d/2},S_{d,\bsgamma})\ge 2^d.
\end{equation}
This contradicts EXP-SPT. 
 
We now apply EXP-SPT for $n(\varepsilon^{2 d(\varepsilon)},
S_{d(\varepsilon)})$ with $d=d(\e)$. Due to the second 
inequality in Lemma 1 
with $\varepsilon\in(0,1)$ we have  
\begin{eqnarray*}
j(\varepsilon)^{d(\varepsilon)} \le   
n(\varepsilon^{2 d(\varepsilon)},S_{d(\varepsilon),\bsgamma}) 
\le  C_{\delta} \left[1+\log 
\left(\frac{1}{\varepsilon}\right)^{2 d(\varepsilon)}\right]^{p^*+\delta}.
\end{eqnarray*}
This yields
$$j(\varepsilon)^{d(\varepsilon)} \le C_{\delta}\left[(1+2
  d(\varepsilon)) \log \frac{1}{\varepsilon}\right]^{p^*+\delta},
$$ 
and hence 
\begin{eqnarray*}
d(\varepsilon) 
\log j(\varepsilon) &\le&  
(p^*+\delta) \left[\log(1+2 d(\varepsilon)) +  
\log \log\frac{1}{\varepsilon}\right]+\log C_{\delta}\nonumber\\
& \le & (p^*+\delta) \left[\log d(\varepsilon) +  
\log \log\frac{1}{\varepsilon}\right]+(p^*+\delta) \log 3 +\log C_{\delta}.
\end{eqnarray*}
Now, since $d(\varepsilon) \rightarrow \infty$ and $\log\log
\frac{1}{\varepsilon}\rightarrow \infty$ as $\varepsilon\rightarrow 0$
it follows that 
\begin{eqnarray}\label{est1}
\limsup_{\varepsilon \rightarrow 0} \frac{d(\varepsilon)\log
  j(\varepsilon)}{\log d(\varepsilon)+\log \log 
\frac{1}{\varepsilon}} \le p^*+\delta.
\end{eqnarray}
For $\varepsilon \rightarrow 0$,
we have $$\frac{d(\varepsilon)}{\log d(\varepsilon)} \rightarrow
\infty$$ 
and $\log j(\varepsilon) \rightarrow \infty$ or  
$\log j(\varepsilon) \rightarrow \log k$, with $k\ge2$,
where the latter case 
appears when $\lambda_1 \ge \lambda_2 \ge \ldots \ge \lambda_k>0$ 
and $\lambda_j = 0$ for $j \ge k+1$. This means that we have  
$$
p^*+\delta \ge \limsup_{\varepsilon \rightarrow 0}
\underbrace{\frac{d(\varepsilon)}{\log d(\varepsilon)}}_{\rightarrow
  \infty} \frac{\overbrace{\log j(\varepsilon)}^{\rightarrow \
    \mbox{{\scriptsize $\infty$ or $\log k$}}} }{1+\frac{\log \log
    \frac{1}{\varepsilon}}{\log d(\varepsilon)}}.
$$ 
    
From this we
deduce that 
$$
\lim_{\varepsilon \rightarrow 0} \frac{\log \log
  \frac{1}{\varepsilon}}{\log d(\varepsilon)} =\infty\ \ \mbox{or,
  equivalently, }\ \ \log d(\varepsilon) = o\left(\log \log
  \frac{1}{\varepsilon}\right).
$$ 
Hence $$\log d(\varepsilon)+\log
\log \frac{1}{\varepsilon}=(1+o(1)) \log \log \frac{1}{\varepsilon}$$
and, inserting this into \eqref{est1}, $$\limsup_{\varepsilon
  \rightarrow 0} \frac{ d(\varepsilon) \log j(\varepsilon)}{\log \log
  \frac{1}{\varepsilon}} \le p^* + \delta.
$$ 
Since $\delta>0$ can be arbitrarily small, this means that $B \le
p^*$ as needed. 
Therefore, the proof for EXP-SPT is completed with $p^*=B$.

\item We only need to show that EXP-PT implies EXP-SPT. 
The conditions $\lim_{j \rightarrow \infty}\lambda_j=\lim_{j \rightarrow \infty}\gamma_j=0$ can be shown as
before.
Under the assumption of EXP-PT and Lemma~\ref{le1}  with $d=d(\e)$, 
there exist non-negative numbers 
$C, q$, and $\tau$ such that $$j(\varepsilon)^{d(\varepsilon)} 
\le n(\varepsilon^{2 d(\varepsilon)},S_{d(\varepsilon),\bsgamma}) 
\le C d(\varepsilon)^q \left(1+2 d(\varepsilon) 
\log \frac{1}{\varepsilon}\right)^{\tau} 
\le C_1 d(\varepsilon)^{q+\tau} 
\left(\log \frac{1}{\varepsilon}\right)^{\tau},
$$ 
where in the last estimate we assumed that $\varepsilon \le
1/{\rm e}$, and $C_1:=3^{\tau} C$. 
Taking the logarithm yields 
\begin{equation}\label{est2}
d(\varepsilon) \log j(\varepsilon) \le (q+\tau) \log d(\varepsilon)+
\tau 
\log \log \frac{1}{\varepsilon}+\log C_1.
\end{equation}
This shows that $$\limsup_{\varepsilon \rightarrow 0}
 \frac{d(\varepsilon) \log j(\varepsilon)}{\log d(\varepsilon)+\log
 \log \frac{1}{\varepsilon}} \le q+\tau.$$

 Now we argue as in the first part of this proof. We have 
$$
\limsup_{\varepsilon \rightarrow 0} 
\underbrace{\frac{d(\varepsilon)}{\log d(\varepsilon)}}_{\rightarrow
  \infty} 
\frac{\overbrace{\log j(\varepsilon)}^{\rightarrow \ 
\mbox{{\scriptsize $\infty$ or $\log k$}}} }{1+\frac{\log \log \frac{1}{\varepsilon}}{\log d(\varepsilon)}} \le q+\tau$$ and hence $$\lim_{\varepsilon \rightarrow 0} \frac{\log \log \frac{1}{\varepsilon}}{\log d(\varepsilon)} =\infty\ \ \mbox{ or, equivalently,}\ \ \log d(\varepsilon) = o\left(\log \log \frac{1}{\varepsilon}\right).$$
Going back to \eqref{est2} we obtain $$d(\varepsilon) \log j(\varepsilon) \le \tau \log \log \frac{1}{\varepsilon}+o\left(\log \log \frac{1}{\varepsilon}\right)$$ and hence $B_{{\rm EXP-SPT}} \le \tau$. This means that we have EXP-SPT and we are done.
\item Assume first that we have 
EXP-QPT with the exponent $t$, i.e., for all $\varepsilon\in(0,1)$
and $d\in\NN$ we have
$$
n(\varepsilon,S_{d,\bsgamma})\le
C\exp\left(t(1+\log\,d)(1+\log(1+\log \varepsilon^{-1}))\right).
$$
 
Then $\lim_{j \rightarrow \infty}\lambda_j=0$ follows from the fact that
$n(\varepsilon,S_{d,\bsgamma})$ is finite.
Using again \eqref{gammazero} we conclude that $\lim_{j \rightarrow \infty}\gamma_j=0$. 
Indeed, if $\lim_{j \rightarrow \infty}\gamma_j=\gamma^*>0$ we have
$$
d\,\log\,2\le \log\,n(\tfrac12(\gamma^*\lambda_2)^{d/2},S_{d,\bsgamma})=
\mathcal{O}(t (\log d)^2),
$$
which is a contradiction for large $d$. 

We now apply Lemma~\ref{le1} for $\varepsilon \in
(0,1/{\rm e})$ and $d=d(\e)\ge1$. 
From the definition of EXP-QPT 
we obtain
\begin{eqnarray*}
j(\varepsilon)^{d(\varepsilon)} &\le& 
C \exp\left(t(1+\log d(\varepsilon))
\left(1+ \log\left(1+ 
\log \left(\frac{1}{\varepsilon}\right)^{2 d(\varepsilon)}\right)
\right)\right)\\
& \le & C \exp\left(t(1+\log d(\varepsilon))
\left(1+2\log 2+ \log d(\varepsilon)+ 
\log \log \frac{1}{\varepsilon}\right)\right).
\end{eqnarray*}
Taking the logarithm yields 
\begin{equation}\label{est3}
d(\varepsilon) \log j(\varepsilon) \le 
\log C + t(1+\log d(\varepsilon))\left(1+2\log 2+
  \log d(\varepsilon)
+ \log \log \frac{1}{\varepsilon}\right)
\end{equation}
and hence $$\limsup_{\varepsilon \rightarrow 0} \frac{d(\varepsilon)
  \log j(\varepsilon)}{\log(1+d(\varepsilon)) [\log \log
  \frac{1}{\varepsilon} + \log d(\varepsilon)] } \le t.
$$ 
We use, at this stage, the already familiar argument:
$$\limsup_{\varepsilon \rightarrow 0} 
\underbrace{\frac{d(\varepsilon)}{\log d(\varepsilon) 
\log (1+d(\varepsilon))} }_{\rightarrow \infty} 
\frac{\overbrace{\log j(\varepsilon)}^{\rightarrow \ 
\mbox{{\scriptsize $\infty$ 
or $\log k$}}}}{1+ \frac{\log \log
\frac{1}{\varepsilon}}{\log d(\varepsilon)}}\le t
$$ 
and hence 
$$
\log d(\varepsilon)=o\left(\log \log \frac{1}{\varepsilon}\right).
$$ 
Inserting this into \eqref{est3} we obtain that 
\begin{equation}\label{eq:BEXPQPT}
B_{{\rm EXP-QPT}}:=\limsup_{\varepsilon \rightarrow 0} 
\frac{d(\varepsilon) \log j(\varepsilon)}{[\log
  d(\varepsilon)]\log\log 
\frac{1}{\varepsilon}}\le t,
\end{equation}
as needed.
\vskip 1pc
Now assume that $\lim_{j \rightarrow \infty}\lambda_j=\lim_{j \rightarrow \infty}\gamma_j=0$ and that
$B:=B_{{\rm EXP-QPT}}<\infty$. 
Then for every $\delta>0$ there exists an
$\varepsilon_{\delta}>0$ such that 
\begin{equation}\label{condBQPT}
\frac{d(\varepsilon) 
\log j(\varepsilon)}{[\log d(\varepsilon)]
\log\log \frac{1}{\varepsilon}}\le B +\delta \ \ \ 
\mbox{for all $\varepsilon \in (0,\varepsilon_{\delta})$.}
\end{equation}
We further assume that $\varepsilon_{\delta}$ is small enough
such that 
$d(\varepsilon)\ge 2$ for all $\varepsilon \in (0,\varepsilon_{\delta})$.

{}From this and Lemma~\ref{le1} we obtain  
for all $\varepsilon \in (0,\varepsilon_{\delta})$, 
$$
n(\varepsilon,
S_{d,\bsgamma}) 
\le j(\varepsilon)^{d(\varepsilon)} \le 
\exp\left((B+\delta) [\log d(\varepsilon)] 
\log \log \frac{1}{\varepsilon}\right).
$$ 
Hence, for all $\varepsilon \in (0,\varepsilon_{\delta})$ 
and all $d \ge d(\varepsilon)$ we have 
$$
n(\varepsilon,S_{d,\bsgamma}) \le 
\exp\left((B+\delta) [\log d] \log \log \frac{1}{\varepsilon}\right).
$$
If $3\le d \le d(\varepsilon)$ then we obtain from \eqref{condBQPT} 
for all $\varepsilon \in (0,\varepsilon_{\delta})$ 
$$
\log j(\varepsilon)\le (B+\delta)  
\frac{\log d(\varepsilon)}{d(\varepsilon)} 
\log \log \frac{1}{\varepsilon} \le 
(B+\delta) \frac{\log d}{d}  \log \log \frac{1}{\varepsilon},
$$ 
where in the last estimate we use the fact that $d(\varepsilon)\ge 3$
and that the function $d \mapsto (\log d)/d$ is decreasing for
$d\in\{3,\ldots,d(\varepsilon)\}$. 

For $d=2$ we obtain again from \eqref{condBQPT} 
for all $\varepsilon \in (0,\varepsilon_{\delta})$, 
$$
\log j(\varepsilon)\le (B+\delta)  
\frac{\log d(\varepsilon)}{d(\varepsilon)} 
\log \log \frac{1}{\varepsilon} \le 
(B+\delta) \frac{1 + \log 2}{2}  \log \log \frac{1}{\varepsilon},
$$ 
and for $d=1$ we obtain from \eqref{condBQPT} for all $\varepsilon \in (0,\varepsilon_{\delta})$, 
$$
\log j(\varepsilon)\le (B+\delta)  
\frac{\log d(\varepsilon)}{d(\varepsilon)} 
\log \log \frac{1}{\varepsilon} \le 
(B+\delta)  \log \log \frac{1}{\varepsilon}.
$$

Hence we obtain $$\log j(\varepsilon)^d\le  (B+\delta)  [1+\log d] \log \log \frac{1}{\varepsilon}.$$ 

The first estimate in Lemma~\ref{le1} tells us that 
$$
n(\varepsilon,S_{d,\bsgamma}) \le 
j(\varepsilon)^{\min(d(\varepsilon),d)}.
$$ 
Hence, for all $\varepsilon \in (0,\varepsilon_{\delta})$ 
and all $d\in\{1,\ldots,d(\varepsilon)\}$ we have
\begin{eqnarray*}
 n(\varepsilon,S_{d,\bsgamma}) \le j(\varepsilon)^d \le \exp\left((B+\delta) [1+\log d] \log\log \frac{1}{\varepsilon} \right).
\end{eqnarray*}

This implies $$n(\varepsilon,S_{d,\bsgamma}) 
\le \exp\left((B+\delta) [1+\log d] 
\log\log \frac{1}{\varepsilon} \right)
$$ 
for all $\varepsilon \in (0,\varepsilon_{\delta})$ 
and for all $d \in \NN$. 

\iffalse
For $\e\in(0,\e_\delta)$ and $d=1,2$, we use the fact that
$$
n(\e,S_{1,\bsgamma})\le n(\e,S_{2,\bsgamma})\le n(\e,S_{3,\bsgamma})
\le \exp\left((B+\delta)[\log 3]\log\,\log \frac1{\e}\right).
$$
\fi
%FP the enough is too weak in order to get the exponent B; considering d=1 we would only get an exponent B \log 3

Finally, for $\e\in(\e_\delta,1)$ we set
$C_\delta=n(\e_\delta,S_{d(\e_\delta),\bsgamma})$ and conclude
$$
n(\e,S_{d,\bsgamma})\le n(\e_\delta,S_{d(\e_{\delta}),\bsgamma})
\le C_\delta\exp\left((B+\delta)[\log d]\log \log \frac1{\e}\right).
$$ 
Hence there exists a $C_{\delta}>0$ such that 
$$
n(\varepsilon,S_{d,\bsgamma}) \le 
C_{\delta} \exp\left((B+\delta) 
[1+\log d] \log \log \frac{1}{\varepsilon}\right)
$$ 
for all $\varepsilon \in (0,1)$ and $d \in \NN$. 
This implies EXP-QPT
with $t\le B+\delta$. Since $t\ge B$ due to \eqref{eq:BEXPQPT}
and the positive $\delta$ can be arbitrarily small,  
the infimum of such $t$ is $B$, as claimed.

\item From \cite[Theorem~3]{KW19} 
we know that EXP-$(s,t)$-WT holds for any positive $s$ and $t$ 
if and only if 
\begin{equation}\label{condWT}
\sup_{d \in \NN}\exp(-c d^t)\, 
\sum_{j=1}^{\infty}{\rm e}^{-c \left(1+
\log \frac{2}{\lambda_{d,\bsgamma,j}}\right)^s}< \infty \ \ \ \mbox{ for all $c>0$,}
\end{equation}
where $\lambda_{d,\bsgamma,1} \ge \lambda_{d,\bsgamma,2} \ge \ldots$ denote the eigenvalues of 
$W_{d,\bsgamma}= S_{d,\bsgamma}^* S_{d,\bsgamma}: H_{d,\bsgamma}\rightarrow H_{d,\bsgamma}$ ordered in a non-increasing fashion.

Assume first that $s=1$. For the above sum we have
\begin{eqnarray*}
\sum_{j=1}^{\infty}{\rm e}^{-c \left(1+\log \frac{2}{\lambda_{d,\bsgamma,j}}\right)} & = & \frac{1}{{\rm e}^{c(1+\log 2)}}  \sum_{j_1,\ldots,j_d=1}^{\infty} {\rm e}^{c \log(\lambda_{1,j_1}\lambda_{2,j_2}\cdots \lambda_{d,j_d})}\\
& = & \frac{1}{(2 {\rm e})^c} \prod_{k=1}^d \left(\sum_{j=1}^{\infty} \lambda_{k,j}^c\right) =  \frac{1}{(2 {\rm e})^c} \prod_{k=1}^d \left(1+\gamma_k^c\sum_{j=2}^{\infty} \lambda_j^c\right).
\end{eqnarray*}
Hence, EXP-$(1,t)$-WT is equivalent to 
\begin{equation*}\label{condWT2}
\sup_{d \in \NN}\exp(-c d^t)\, 
\prod_{k=1}^d \left(1+\gamma_k^c\sum_{j=2}^{\infty} \lambda_j^c\right)< \infty \ \ \ \mbox{ for all $c>0$.}
\end{equation*}
Taking the logarithm, we find that EXP-$(1,t)$-WT is equivalent to   
\begin{equation}\label{condWT3}
\sup_{d \in \NN}\left(\sum_{k=1}^d 
\log \left(1+\gamma_k^c\sum_{j=2}^{\infty} \lambda_j^c\right) - 
c d^t\right)< \infty \ \ \ \mbox{ for all $c>0$.}
\end{equation}

Assume first that \eqref{condWT3} holds. Then we have 
$\sum_{j=2}^{\infty} \lambda_j^c < \infty$ for all $c>0$ 
and hence, by Lemma~\ref{le2}, 
$$
\lim_{j \rightarrow \infty}\frac{\log \frac{1}{\lambda_j}}{\log
  j}=\infty.
$$ 
Consider the case $t=1$. Assume that $\lim_{j \rightarrow \infty}\gamma_j=\gamma_*>0$.
Then \eqref{condWT3} implies 
$$
\sup_{d \in \NN}\left(\log \left(1+
\gamma_*^c\sum_{j=2}^{\infty}
    \lambda_j^c\right) - c\right) d < \infty \ \ \ \mbox{ for all
  $c>0$.}
$$ 
This, however, yields a contradiction, since 
$\gamma_*^c\sum_{j=2}^{\infty}
    \lambda_j^c$ tends to infinity with $c$ approaching zero,
and therefore for small $c>0$ we have 
$$\log \left(1+\gamma_*^c\sum_{j=2}^{\infty} \lambda_j^c\right) - c
 >0.$$
Hence, we must have 
$$
 \lim_{j \rightarrow \infty} \gamma_j=0.
$$ 
Thus we have shown the necessary conditions for EXP-WT.

Now assume that 
$$
\lim_{k \rightarrow \infty} \gamma_k=0\ \ \ \mbox{ and } 
\lim_{j \rightarrow \infty}\frac{\log \frac{1}{\lambda_j}}{\log
  j}=\infty.
$$ 
{}From Lemma~\ref{le2} we obtain that  
$M_c^*:=\sum_{j=2}^{\infty} \lambda_j^c < \infty$ for all $c>0$. 
Hence, for every fixed $c>0$ we have
\begin{eqnarray*}
\sum_{k=1}^d \log \left(1+\gamma_k^c\sum_{j=2}^{\infty} \lambda_j^c\right) - c d & \le &  M_c^* \sum_{k=1}^d \gamma_k^c  - c d\\
& = & M_c^* \sum_{k=1}^d \left( \gamma_k^c -\frac{c}{M_c^*}\right)\\
& \le & M_c^* \sum_{k=1}^{k_c^*} \left( \gamma_k^c -\frac{c}{M_c^*}\right)< \infty,
\end{eqnarray*}
where $k_c^*$ is the largest $k \in \NN$ such that $\gamma_k^c -\frac{c}{M_c^*}>0$. This number is well defined, since $\lim_{k \rightarrow \infty} \gamma_k=0$. Hence \eqref{condWT3} holds for $t=1$ and this implies EXP-WT.
\item Let $s=1$ and $t<1$. Assume we have EXP-$(1,t)$-WT. 
Then we have EXP-WT and hence 
$$
\lim_{k\rightarrow \infty}\gamma_k=0 \ \ \mbox{and}\ \ \ 
\lim_{j \rightarrow \infty} \frac{\log \frac{1}{\lambda_j}}{\log
  j}=\infty.
$$ 
Now \eqref{condWT3} can be rewritten as 
$$
\sum_{k=1}^d \log \left(1+\gamma_k^c M_c^*\right) - c d^t \le A_c\ \ \
\mbox{ for all $d\in \NN$},$$ with a positive $A_c<\infty$ for every
$c>0$. 
We use the inequality
$$
\log(1+x) \ge (\log 2)\,x\ \ \ \mbox{for all $x \in [0,1]$}. 
$$
Let $k_c^*$ be the largest $k$ such that $\gamma_k^c M_c^*>1$. If such a $k_c^*$ does not exist, we set $k_c^*:=0$. Then we have
\begin{eqnarray*}
A_c & \ge & \sum_{k=1}^d \log \left(1+\gamma_k^c M_c^*\right) - c d^t\\
& \ge &  \sum_{k=1}^{k_c^*} \log \left(1+\gamma_k^c M_c^*\right) +  (\log 2) M_c^* \sum_{k=k_c^*+1}^d \gamma_k^c  - c d^t\\
& \ge &  \sum_{k=1}^{k_c^*} \log \left(1+\gamma_k^c M_c^*\right) +  (\log 2) M_c^* (d-k_c^*) \gamma_d^c  - c d^t.
\end{eqnarray*}
{}From here it follows by an argument similar to the one used in the proof of Lemma~\ref{le2} that
$$\liminf_{d\rightarrow \infty} \frac{\log
  \frac{1}{\gamma_d}}{\log d} \ge \frac{1-t}{c} \ \ \ \mbox{for all
  $c>0$.}
$$ Since $c$ can be arbitrarily small and $t<1$ we have
$$\lim_{d\rightarrow \infty} \frac{\log
  \frac{1}{\gamma_d}}{\log d}=\infty,
$$ as desired.

On the other hand for
$$
\lim_{j \rightarrow \infty} 
\frac{\log \frac{1}{\gamma_j}}{\log j}=\infty\ \ 
\mbox{ and }\ \ \lim_{j \rightarrow \infty} \frac{\log 
\frac{1}{\lambda_j}}{\log j}=\infty,
$$ 
Lemma~\ref{le2} yields  
$$
\Gamma_c:=\sum_{k=1}^{\infty} \gamma_k^c < \infty \ \ \mbox{and}\ \  M_c^*:=\sum_{j=2}^{\infty} \lambda_j^c < \infty \ \ \ \mbox{for all $c>0$.}$$ Now, for every fixed $c>0$ we have
\begin{eqnarray*}
\sum_{k=1}^d \log \left(1+\gamma_k^c\sum_{j=2}^{\infty} \lambda_j^c\right) - c d^t & \le &  M_c^* \Gamma_c - c d^t
\end{eqnarray*}
and hence $$\sup_{d \in \NN} \sum_{k=1}^d \log
\left(1+\gamma_k^c\sum_{j=2}^{\infty} \lambda_j^c\right) - c d^t <
\infty.$$ Hence \eqref{condWT3} holds 
and this implies EXP-$(1,t)$-WT.
\item Let $s=1$ and $t>1$. %This case is proved in (N.1) of Theorem 2 in \cite{HKW19}. 
    
The sufficiency of the condition on the eigenvalues is shown for the un-weighted case (i.e., all $\gamma_j$ equal 1) in  Case (N.1) of \cite[Theorem~2]{HKW19}. If some of the weights satisfy $\gamma_j < 1$, then the problem is easier than in the un-weighted case, which means that the sufficient condition holds as well.

Regarding necessity of the condition, we have shown that EXP-$(1,t)$-WT implies \eqref{condWT3} and hence we find as above that the condition $$\lim_{j \rightarrow \infty} \frac{\log \frac{1}{\lambda_j}}{\log j}=\infty$$ is indeed necessary.
\iffalse
start again from \eqref{condWT}. Let $d$ and $c$ be fixed, and let $s=1$ and $t>1$. 

Then we have
\begin{eqnarray*}
 \lefteqn{\exp(-c d^t)\, 
\sum_{j=1}^{\infty}{\rm e}^{-c \left(1+
\log \frac{2}{\lambda_{d,\bsgamma,j}}\right)}=}\\
&=& \exp(-c d^t)\, 
\sum_{j_1=1}^\infty \cdots \sum_{j_d =1}^\infty 
\exp\left(-c \left(\log (2 {\rm e}) + \log\left(\prod_{k=1}^d \frac{1}{\lambda_{k,j_k}}\right)\right)\right)\\
&=& \exp(-c d^t)\, (2 {\rm e})^{-c}\, 
\sum_{j_1=1}^\infty \cdots \sum_{j_d =1}^\infty 
\exp\left(-c \log\left(\prod_{k=1}^d \frac{1}{\lambda_{k,j_k}}\right)\right)\\
&=& \exp(-c d^t)\, (2 {\rm e})^{-c}\, 
\sum_{j_1=1}^\infty \cdots \sum_{j_d =1}^\infty 
\exp\left(-c \sum_{k=1}^d\log \frac{1}{\lambda_{k,j_k}} \right)\\
&=& \exp(-c d^t)\, (2 {\rm e})^{-c}\, \prod_{k=1}^d 
\left(\sum_{j=1}^\infty 
\exp\left(-c \log \frac{1}{\lambda_{k,j}} \right)\right)\\
&=& \exp(-c d^t)\, (2 {\rm e})^{-c}\, \prod_{k=1}^d 
\left(1 +\sum_{j=2}^\infty 
\exp\left(-c \log \frac{1}{\lambda_{k,j}} \right)\right)\\
&=& \exp(-c d^t)\, (2 {\rm e})^{-c}\, \prod_{k=1}^d 
\left(1 +\exp\left(-c \log \frac{1}{\gamma_k}\right)\sum_{j=2}^\infty 
\exp\left(-c \log \frac{1}{\lambda_{j}} \right)\right).
\end{eqnarray*}

In order for the latter term to be finite we need $\sum_{j=2}^\infty 
\exp\left(-c \log \frac{1}{\lambda_{j}}\right)$ to converge, and this requirement 
is independent of the $\gamma_k$. Hence, by Lemma \ref{le2}, we see that the 
condition on the $\lambda_j$ is indeed necessary. 
\fi

\item Let $s > 1$, $t\le1$ and $\lambda_2<1$.     
The sufficiency of the condition on the eigenvalues is shown for the un-weighted case (i.e., all $\gamma_j$ equal 1) in  Case (N.3) of \cite[Theorem~2]{HKW19}. If some of the weights satisfy $\gamma_j < 1$, then the problem is easier than in the un-weighted case, which means that the sufficient condition holds as well.

We show that the condition on the $\lambda_j$'s is also necessary. We use \eqref{condWT} and Lemma~\ref{le3} to obtain 
\begin{eqnarray*}
\lefteqn{\sum_{j=1}^{\infty}{\rm e}^{-c \left(1+\log \frac{2}{\lambda_{d,\bsgamma,j}}\right)^s}}\\
& \ge & {\rm e}^{-c 2^{s-1} (1+\log 2)^s} \sum_{j=1}^{\infty}  {\rm e}^{-c 2^{s-1} \left(\log \frac{1}{\lambda_{d,\bsgamma,j}}\right)^s}\\
& = &  {\rm e}^{-c 2^{s-1} (1+\log 2)^s} \sum_{j_1,\ldots,j_d=1}^{\infty}  {\rm e}^{-c 2^{s-1} \left(\sum_{k=1}^d \log \frac{1}{\lambda_{k,j_k}}\right)^s}\\
& \ge &  {\rm e}^{-c 2^{s-1} (1+\log 2)^s} \sum_{j_1,\ldots,j_d=1}^{\infty}  {\rm e}^{-c (2d)^{s-1} \sum_{k=1}^d \left(\log \frac{1}{\lambda_{k,j_k}}\right)^s}\\
& = &  {\rm e}^{-c 2^{s-1} (1+\log 2)^s} \prod_{k=1}^d \left(1+\sum_{j=2}^{\infty}  {\rm e}^{-c (2d)^{s-1} \left(\log \frac{1}{\gamma_k} + \log \frac{1}{\lambda_j}\right)^s}\right)\\
& \ge & {\rm e}^{-c 2^{s-1} (1+\log 2)^s} \prod_{k=1}^d \left(1+  {\rm e}^{-c (4 d)^{s-1} \left(\log \frac{1}{\gamma_k}\right)^s} \sum_{j=2}^{\infty}  {\rm e}^{-c (4 d)^{s-1} \left(\log \frac{1}{\lambda_j}\right)^s}\right).
\end{eqnarray*}
Put
\begin{equation}\label{defGL}
\Gamma_k:={\rm e}^{-(\log \frac{1}{\gamma_k})^s}\ \ \ \mbox{ and }\ \ \ \Lambda_j:={\rm e}^{- (\log \frac{1}{\lambda_j})^s}.
\end{equation}
Then we have $$\sum_{j=1}^{\infty}{\rm e}^{-c \left(1+\log \frac{2}{\lambda_{d,\bsgamma,j}}\right)^s} \ge {\rm e}^{-c 2^{s-1} (1+\log 2)^s} \prod_{k=1}^d \left(1+  \Gamma_k^{c (4 d)^{s-1}} \sum_{j=2}^{\infty}  \Lambda_j^{c (4 d)^{s-1}} \right).$$

Assume that EXP-$(s,t)$-WT holds true. Then according to \eqref{condWT}  together with the above lower bound (for $d=1$) we obtain 
$$\exp(-c (1+ 2^{s-1} (1+\log 2)^s)) \left(1+  \Gamma_1^{c 4^{s-1}} \sum_{j=2}^{\infty}  \Lambda_j^{c 4^{s-1}} \right) < \infty\ \ \ \mbox{ for all $c>0$.}$$
This requires that  $$\sum_{j=2}^{\infty}  \Lambda_j^{c} < \infty\ \ \ \mbox{ for all $c>0$.}$$ According to Lemma~\ref{le2} this is equivalent to 
$$
\lim_{j \rightarrow \infty} \frac{\log \frac{1}{\Lambda_j}}{\log j}=\infty,
$$ 
and this condition holds if and only if 
\begin{equation*}
\lim_{j \rightarrow \infty} \frac{\left(\log \frac{1}{\lambda_j}\right)^s}{\log j}=\infty.
\end{equation*}
This finishes the proof for the necessary condition. Note that for this part we did not use that $\lambda_2<1$.

\item Let $s>1$, $t\le1$ and $\lambda_2=1$.
The necessary condition on the $\lambda_j$'s follows from the above where we did not use that $\lambda_2<1$.
To show that EXP-$(s,t)$-WT implies $\gamma_p<1$ we apply Lemma
\ref{le4} with all $k_j=2$ for which $\lambda_2=1$.
If all $\gamma_j=1$ then $\lim_{d\to\infty}d^t/(d\log 2)$ is zero
for $t<1$ and $1/\log 2$ for $t=1$ and it is never infinity. 
It is infinity only if $\gamma_p<1$ for some $p$, as claimed. 

In order to show that the conditions on the $\lambda_j$'s and $\gamma_j$'s
imply EXP-$(s,t)$-WT, we switch to a possibly harder problem for which
the weights are given by
$$
1=\gamma_1=\ldots=\gamma_{p-1}>\gamma_p=\gamma_{p+1}=\ldots.
$$
and the eigenvalues
$$
\widetilde{\lambda}_1=\ldots=\widetilde{\lambda}_{p-1}=1\ \ \ \mbox{and}\ \ \ \widetilde{\lambda}_j=\gamma_p
\lambda_j \  \ \mbox{for $j=p,p+1,\dots$}.
$$  
Note that $\widetilde{\lambda}_p<\widetilde{\lambda}_{p-1}=1$. 

For $d\ge p$, we have
\begin{eqnarray*}
\lefteqn{
\sum_{j=1}^\infty {\rm e}^{-c\left(1+\log \frac2{\lambda_{d,\bsgamma,j}}
\right)^s}} \\
&\le&\sum_{j_1,\dots,j_d=1}^\infty
{\rm e}^{-c\left(\sum_{k=1}^d\log\frac1{\lambda_{k,j_k}}\right)^s}\\
&\le&
\sum_{j_1,\dots,j_{p-1}=1}^\infty
{\rm e}^{-c\sum_{k=1}^{p-1}\left(\log\frac1{\lambda_{k,j_k}}\right)^s}
\sum_{j_p,\dots,j_d=1}^\infty
{\rm e}^{-c\left(\sum_{k=p}^d\log\frac1{\widetilde\lambda_{j_k}}\right)^s}\\
&\le&\left(1+\sum_{j=2}^\infty\Lambda_j^c\right)^{p-1}
\sum_{j_p,\dots,j_d=1}^\infty
{\rm e}^{-c\left(\sum_{k=p}^d\log\frac1{\widetilde\lambda_{j_k}}\right)^s},
\end{eqnarray*}
where $\Lambda_j={\rm e}^{-\left(\log\frac1{\lambda_j}\right)^s}$.

The series $\sum_{j=2}^\infty \Lambda_j^c$ is convergent due to Lemma \ref{le2}
and the conditions on the $\lambda_j$'s. Therefore the first factor
is of order $1$. 
The second factor
$$
\sum_{j_p,\dots,j_d=1}^\infty
{\rm e}^{-c\left(\sum_{k=p}^d\log\frac1{\widetilde\lambda_{j_k}}\right)^s}
=\exp\left(\Theta\left(\left(\log d\right)^{\max(1,s/(s-1)}\right)\right),
$$
as proved in \cite[(A.4)]{HKW19}. Therefore
$$
\sup_d {\rm e}^{-cd^t} 
\sum_{j=1}^\infty {\rm e}^{-c\left(1+\log \frac2{\lambda_{d,\bsgamma,j}}
\right)^s}=
\sup_d\exp\left(-cd^t+\Theta\left((\log
    d)^{\max(1,s/(s-1))}\right)\right)<\infty,
$$
which completes the proof of this item. 
\item
Let $s>1$ and $t>1$. The sufficiency of the condition on the eigenvalues is shown for the un-weighted case (i.e., all $\gamma_j$ equal 1) in  Case (N.1) of \cite[Theorem~2]{HKW19}. If some of the weights satisfy $\gamma_j < 1$, then the problem is easier than in the un-weighted case, which means that the sufficient condition holds as well. 

In order to show that the condition on the $\lambda_j$'s is also necessary one proceeds as above in Item {\it 7.}

\item Let $s<1$ and $t>1$. Note that EXP-$(s,t)$-WT holds for arbitrary
$\gamma_j$'s iff this holds for $\gamma_j=1$ for all $j\in\NN$.
This case is proved in Case (N.2) of \cite[Theorem~2]{HKW19}.
\item
Let $s<1$ and $t=1$. 
Suppose first that EXP-$(s,1)$-WT holds. Then EXP-WT also holds and
$\lim_{k\rightarrow \infty}\gamma_k=\lim_{j \rightarrow \infty}\lambda_j=0$. 
Take integers $d,k,j$ with $j\ge2$ and $k\le d$, and define
$$
\e^{2}=\gamma_k^d\lambda_j^d\alpha,
$$
where $\alpha \in(0,1)$ such that $\e<1$.
Then
$\log n(\e,S_{d,\bsgamma})\ge d\log j$ and, proceeding as before, we conclude
\begin{eqnarray*}
\infty&=&
\lim_{d+\gamma_k^{-d}\lambda_j^{-d}\to\infty}
\frac{d+\left(\log\frac1{\e}\right)^s}{d \log j}\\
&=&
\lim_{d +\gamma_k^{-d}\lambda_j^{-d}\to\infty}
\frac{d+2^{-s}\left[d\left(\log
      \frac1{\gamma_k}\right)+d\left(\log \frac1{\lambda_j}\right)
+\log \frac1{\alpha}\right]^s}{d \log j}\\
&\le&
\lim_{d+\gamma_k^{-d}\lambda_j^{-d}\to\infty}
\frac{d+\left(\log \frac1{\alpha}\right)^s+d^s\left(\left(\log\frac1{\gamma_k}
\right)^s+\left(\log\frac1{\lambda_j}\right)^s\right)}{d \log j}.
\end{eqnarray*}
Since $[d+(\log \frac{1}{\alpha})^s]/(d \log j)$ does not go to
infinity, we obtain \eqref{111}.  

Suppose now that \eqref{111} holds. We show EXP-$(s,1)$-WT by using 
\eqref{condWT}. {}From Lemma~\ref{le3} we get
\begin{eqnarray*}
\alpha_d & := & \sum_{j=1}^\infty 
{\rm e}^{-c\left(1+\log\frac{2}{\lambda_{d,\bsgamma,j}}\right)^s}\\
& \le & \sum_{j_1,\dots,j_d=1}^\infty {\rm e}^{-c\left(\sum_{k=1}^d\log
\frac1{\lambda_{d,j_k}}\right)^s} \\
& \le &  
\sum_{j_1,\dots,j_d=1}^\infty {\rm e}^{-cd^{s-1}
\sum_{k=1}^d\left(\log\frac1{\lambda_{k,j_k}}\right)^s}.
\end{eqnarray*}
Then, again by Lemma~\ref{le3}, we obtain  
$$
\alpha_d
\le
\prod_{k=1}^d\left(1+\sum_{j=2}^\infty
{\rm e}^{-c(2d)^{s-1}\left[\left(\log
  \frac1{\gamma_k}\right)^s+\left(\log\frac1{\lambda_j}\right)^s\right]}\right).
$$
{}From \eqref{111} we conclude that for any (large) $M$ there exists $C_M$ such that 
for all $d\ge C_M$ we have 
$$
{\rm e}^{-c(2d)^{s-1}\left[\left(\log
  \frac1{\gamma_k}\right)^s+\left(\log\frac1{\lambda_j}\right)^s\right]}\le 
j^{-c2^{s-1}M}.
$$  
Note that the exponent $c2^{s-1}M$ can be sufficiently large
for large $M$, and
therefore the series $\sum_{j=2}^\infty j^{-c2^{s-1}M}$ is convergent
and sufficiently small, say it is $o(c)$.
Then (19) implies that 
\begin{eqnarray*}
\sup_{d\in\NN}
\exp(-c\,d)\alpha_d&=&\sup_{d\in\NN}
\exp(-c\,d)(1+o(c))^d\\
&=&\sup_{d\in\NN}\exp(d(-c+o(c)))<\infty.
\end{eqnarray*}
Hence, EXP-$(s,1)$-WT holds, and the proof is complete.
\end{enumerate}
\end{proof}

\begin{small}
\noindent\textbf{Authors' addresses:}
\\ \\
\noindent Peter Kritzer,
\\
Johann Radon Institute for Computational and Applied Mathematics (RICAM),
Austrian Academy of Sciences, Altenbergerstr.~69, 4040 Linz, Austria\\
 \\
\noindent Friedrich Pillichshammer,
\\
Department of Financial Mathematics and Applied Number Theory,
Johannes Kepler University Linz, Altenbergerstr.~69, 4040 Linz, Austria\\
 \\
\noindent Henryk Wo\'{z}niakowski, \\
Department of Computer Science, Columbia University, New York 10027,
USA and Institute of Applied Mathematics,
University of Warsaw, ul. Banacha 2, 02-097 Warszawa, Poland\\ \\

\noindent \textbf{E-mail:} \\
\texttt{peter.kritzer@oeaw.ac.at}\\
\texttt{friedrich.pillichshammer@jku.at} \\
\texttt{henryk@cs.columbia.edu}
\end{small}
\end{document}